\newtheorem*{maintheorem*}{Main Theorem}
\newtheorem{theorem}{Theorem}[section]
\newtheorem{prop}[theorem]{Proposition}
\newtheorem{lemma}[theorem]{Lemma}
\newtheorem{cor}[theorem]{Corollary}
\newtheorem{conj}[theorem]{Conjecture}
\theoremstyle{definition}
\newtheorem{definition}[theorem]{Definition}
\newtheorem{remark}[theorem]{Remark}
\newtheorem{example}[theorem]{Example}
\numberwithin{equation}{section}
\newcommand{\nn}{\mathbb{N}}
\newcommand{\pp}{\mathbb{P}}
\newcommand{\qq}{\mathbb{Q}}
\newcommand{\rr}{\mathbb{R}}
\newcommand\pval{\mathsf{v}_p}
\newcommand\gp{\mathsf{gp}}
\newcommand{\supp}{\mathsf{Supp}}
\newcommand\lcm{\text{lcm}}
\keywords{numerical semigroups, Puiseux monoids, monoid algebras, atomicity, atomic algebras, atomic Puiseux monoids}
\begin{document}
	
	\mbox{}
	\title[On the Molecules of Puiseux Monoids]{On the molecules of numerical semigroups, \\ Puiseux monoids, and Puiseux algebras}
	
	\author{Felix Gotti}
	\address{Department of Mathematics\\UC Berkeley\\Berkeley, CA 94720 \newline \indent Department of Mathematics\\Harvard University\\Cambridge, MA 02138}
	\email{felixgotti@berkeley.edu}
	\email{felixgotti@harvard.edu}
	
	\author{Marly Gotti}
	\address{Department of Mathematics\\University of Florida\\Gainesville, FL 32611}
	\email{marlycormar@ufl.edu}
	
	\subjclass[2010]{Primary: 20M13, 20M25; Secondary: 13G05, 20M14}
	
	\date{\today}
	
	\begin{abstract}
		A \emph{molecule} is a nonzero non-unit element of an integral domain (resp., commutative cancellative monoid) having a unique factorization into irreducibles (resp., atoms). Here we study the molecules of Puiseux monoids as well as the molecules of their corresponding semigroup algebras, which we call Puiseux algebras. We begin by presenting, in the context of numerical semigroups, some results on the possible cardinalities of the sets of molecules and the sets of reducible molecules (i.e., molecules that are not irreducibles/atoms). Then we study the molecules in the more general context of Puiseux monoids. We construct infinitely many non-isomorphic atomic Puiseux monoids all whose molecules are atoms. In addition, we characterize the molecules of Puiseux monoids generated by rationals with prime denominators. Finally, we turn to investigate the molecules of Puiseux algebras. We provide a characterization of the molecules of the Puiseux algebras corresponding to root-closed Puiseux monoids. Then we use such a characterization to find an infinite class of Puiseux algebras with infinitely many non-associated reducible molecules.
	\end{abstract}
	
	\maketitle
	
	
\section{Introduction}
\label{sec:intro}

Let $M$ be a commutative cancellative monoid. A non-invertible element of $M$ is called an \emph{atom} if it cannot be expressed as a product of two non-invertible elements. 
If $x \in M$ can be expressed as a formal product of atoms, then such a formal product (up to associate and permutation) is called a \emph{factorization} of $x$. If every non-invertible element of $M$ has a factorization, then $M$ is called \emph{atomic}. Furthermore, the atoms and factorizations of an integral domain are the irreducible elements and the formal products of irreducible elements, respectively. All the undefined or informally-defined terms mentioned in this section will be formally introduced later on.

The elements having exactly one factorization are crucial in the study of factorization theory of commutative cancellative monoids and integral domains. Aiming to avoid repeated long descriptions, we call such elements \emph{molecules}. Molecules were first studied in the context of algebraic number theory by W.~Narkiewicz and other authors in the 1960's. For instance, in~\cite{wN66} and~\cite{wN66a} Narkiewicz studied some distributional aspects of the molecules of quadratic number fields. In addition, he gave an asymptotic formula for the number of (non-associated) integer molecules of any algebraic number field~\cite{wN72}. In this paper, we study the molecules of submonoids of~$(\qq_{\ge 0},+)$, including numerical semigroups, and the molecules of their corresponding semigroup algebras.

A \emph{numerical semigroup} is a cofinite submonoid of $(\nn_0,+)$, where $\nn_0 = \{0,1,2,\dots\}$. Every numerical semigroup is finitely generated by its set of atoms and, in particular, atomic. In addition, if $N \neq \nn_0$ is a numerical semigroup, then it contains only finitely many molecules. Notice, however, that every positive integer is a molecule of $(\nn_0,+)$. Figure~\ref{fig:molecule of four NS} shows the distribution of the sets of molecules of four numerical semigroups. We begin Section~\ref{sec:molecules of NS} pointing out how the molecules of numerical semigroups are related to the Betti elements. Then we show that each element in the set $\nn_{\ge 4} \cup \{\infty\}$ (and only such elements) can be the number of molecules of a numerical semigroup. We conclude our study of molecules of numerical semigroups exploring the possible cardinalities of the sets of reducible molecules (i.e., molecules that are not atoms).

A submonoid of $(\qq_{\ge 0},+)$ is called a \emph{Puiseux monoid}. Puiseux monoids were first studied in \cite{fG17} and have been systematically investigated since then (see~\cite{CGG20} and references therein). Albeit a natural generalization of the class of numerical semigroups, the class of Puiseux monoids contains members having infinitely many atoms and, consequently, infinitely many molecules. A Puiseux monoid is \emph{prime reciprocal} if it can be generated by rationals of the form $a/p$, where $p$ is a prime and $a$ is a positive integer not divisible by $p$.  
In Section~\ref{sec:molecules of PM}, we study the sets of molecules of Puiseux monoids, finding infinitely many non-isomorphic Puiseux monoids all whose molecules are atoms (in contrast to the fact that the set of molecules of a numerical semigroup always differs from its set of atoms). In addition, we construct infinitely many non-isomorphic Puiseux monoids having infinitely many molecules that are not atoms (in contrast to the fact that the set of molecules of a nontrivial numerical semigroup is always finite). We conclude Section~\ref{sec:molecules of PM} characterizing the sets of molecules of prime reciprocal Puiseux monoids.
\begin{figure}
	\centering
	\includegraphics[width = 11cm]{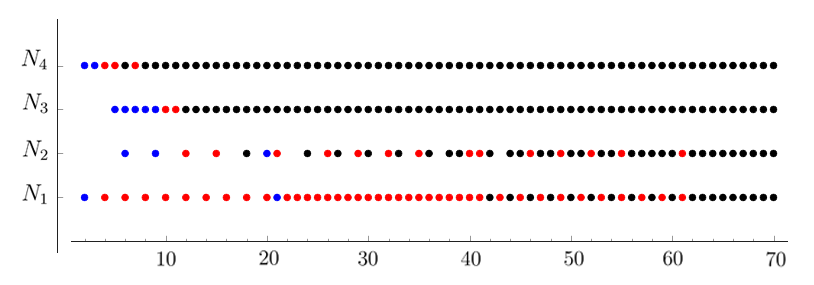}
	\caption{The dots on the horizontal line labeled by $N_i$ represent the nonzero elements of the numerical semigroup $N_i$; here we are setting $N_1 = \langle 2, 21 \rangle$, $N_2 = \langle 6,9,20 \rangle$, $N_3 = \langle 5,6,7,8,9 \rangle$, and $N_4 = \langle 2,3 \rangle$.
		Atoms are represented in blue, molecules that are not atoms in red, and non-molecules in black.}
	\label{fig:molecule of four NS}
\end{figure}

The final section of this paper is dedicated to the molecules of the semigroup algebras of Puiseux monoids, which we call \emph{Puiseux algebras}. Puiseux algebras have been recently studied in~\cite{ACHZ07,CG19,GK20,fG18}. First, for a fixed field $F$ we establish a bijection between the set molecules of a Puiseux monoid and the set of non-associated monomial molecules of its corresponding Puiseux algebra over $F$. Then we characterize the molecules of Puiseux algebras of root-closed Puiseux monoids. We conclude this paper using the previous characterization to exhibit a class of Puiseux algebras having infinitely many molecules that are neither monomials nor irreducibles.
\medskip

\section{Monoids, Atoms, and Molecules}
\label{sec:background}
\smallskip

\subsection{General Notation} In this section we review the nomenclature and main concepts on commutative monoids and factorization theory we shall be using later. For a self-contained approach to the theory of commutative monoids we suggest~\cite{pG01} by P.~A.~Grillet, and for background on non-unique factorization theory of atomic monoids and integral domains the reader might want to consult \cite{GH06}~by A.~Geroldinger and F.~Halter-Koch.

We use the double-struck symbols $\mathbb{N}$ and $\mathbb{N}_0$ to denote the set of positive integers and the set of nonnegative integers, respectively, while we let $\pp$ denote the set of primes. If $R \subseteq \rr$ and $r \in \mathbb{R}$, then we set
\[
	R_{\ge r} := \{x \in R : x \ge r\}.
\]
The notation $R_{> r}$ is used in a similar way. We let the symbol~$\emptyset$ denote the empty set. If $q \in \qq_{> 0}$, then the unique $a,b \in \nn$ such that $q = a/b$ and $\gcd(a,b)=1$ are denoted by $\mathsf{n}(q)$ and $\mathsf{d}(q)$, respectively. For $Q \subseteq \qq_{>0}$, we call
\[
	\mathsf{n}(Q) := \{\mathsf{n}(q) : q \in Q\} \quad \text{and} \quad \mathsf{d}(Q) := \{\mathsf{d}(q) : q \in Q\}
\]
the \emph{numerator set} and \emph{denominator set} of $Q$, respectively. In addition, if $S$ is a set consisting of primes and $q \in \qq_{> 0}$, then we set
\[
	\mathsf{D}_S(q) := \{ p \in S : p \mid \mathsf{d}(q) \} \quad \text{and} \quad \mathsf{D}_S(Q) := \cup_{q \in Q} \mathsf{D}_S(q).
\]
For $p \in \pp$, the $p$-\emph{adic valuation} on $\qq_{\ge 0}$ is the map defined by $\pval(q) = \pval(\mathsf{n}(q)) - \pval(\mathsf{d}(q))$ for $q \in \mathbb{Q}_{> 0}$ and $\pval(0) = \infty$, where for $n \in \nn$ the value $\pval(n)$ is the exponent of the maximal power of $p$ dividing $n$. It can be easily seen that the $p$-adic valuation satisfies that $\pval(q_1 + \dots + q_n) \ge \min\{\pval(q_1), \dots, \pval(q_n) \}$ for every $n \in \nn$ and $q_1, \dots, q_n \in \qq_{> 0}$.
\medskip

\subsection{Monoids} Throughout this paper, we will tacitly assume that the term \emph{monoid} by itself always refers to a commutative and cancellative semigroup with identity. In addition, we will use additive notation by default and switch to multiplicative notation only when necessary (in which case, the notation will be clear from the context). For a monoid $M$, we let $M^\bullet$ denote the set $M \! \setminus \! \{0\}$. If $a,c \in M$, then we say that~$a$ \emph{divides}~$c$ \emph{in} $M$ and write $a \mid_M c$ provided that $c = a + b$ for some $b \in M$. We write $M = \langle S \rangle$ when $M$ is generated by a set $S$. The monoid $M$ is \emph{finitely generated} if it can be generated by a finite set; otherwise, $M$ is said to be \emph{non-finitely generated}. A succinct exposition of finitely generated monoids can be found in~\cite{GR99}.
\medskip

\subsection{Atoms and Molecules} The set of invertible elements of $M$ is denoted by $M^\times$\!, and $M$ is said to be \emph{reduced} if $M^\times$ contains only the identity element.

\begin{definition}
	An element $a \in M \! \setminus \! M^\times$ is an \emph{atom} provided that for all $u,v \in M$ the fact that $a = u + v$ implies that either $u \in M^\times$ or $v \in M^\times$. The set of atoms of $M$ is denoted by $\mathcal{A}(M)$, and $M$ is called \emph{atomic} if $M = \langle \mathcal{A}(M) \rangle$.
\end{definition}

Let $M$ be a reduced monoid. Then the \emph{factorization monoid} $\mathsf{Z}(M)$ of $M$ is the free commutative monoid on $\mathcal{A}(M)$. The elements of $\mathsf{Z}(M)$, which are formal sums of atoms, are called \emph{factorizations}. If $z = a_1 + \dots + a_n \in \mathsf{Z}(M)$ for some $a_1, \dots, a_n \in \mathcal{A}(M)$, then $|z| := n$ is called the \emph{length} of $z$. As $\mathsf{Z}(M)$ is free on $\mathcal{A}(M)$, there is a unique monoid homomorphism from $\mathsf{Z}(M)$ to $M$ determined by the assignment $a \mapsto a$ for all $a \in \mathcal{A}(M)$. Such a monoid homomorphism is called the \emph{factorization homomorphism} of $M$ and is denoted by $\phi_M$ (or just $\phi$ when there is no risk of ambiguity involved). For $x \in M$, the sets
\[
	\mathsf{Z}(x) := \mathsf{Z}_{M}(x) := \phi^{-1}(x) \subseteq \mathsf{Z}(M) \quad \text{ and } \quad \mathsf{L}(x) := \mathsf{L}_{M}(x) := \{|z| : z \in \mathsf{Z}(x)\}
\]
are called the \emph{set of factorizations} and the \emph{set of lengths} of $x$, respectively. Clearly, $M$ is atomic if and only if $\mathsf{Z}(x) \neq \emptyset$ for all $x \in M$.

Let $M_{\text{red}}$ denote the set of classes of $M$ under the equivalence relation $x \sim y$ if $y = x + u$ for some $u \in M^\times$. It turns out that $M_{\text{red}}$ is a monoid with the addition operation inherited from $M$. The monoid $M_{\text{red}}$ is called the \emph{reduced monoid} of $M$ (clearly, $M_{\text{red}}$ is reduced). Note that an element $a$ belongs to $\mathcal{A}(M)$ if and only if the class of $a$ belongs to $\mathcal{A}(M_{\text{red}})$. If $M$ is an atomic monoid (that is not necessarily reduced), then we set $\mathsf{Z}(M) := \mathsf{Z}(M_{\text{red}})$ and, for $x \in M$, we define $\mathsf{Z}(x)$ and $\mathsf{L}(x)$ in terms of $\mathsf{Z}(M)$ as we did for the reduced case.

As one of the main purposes of this paper is to study elements with exactly one factorization in Puiseux monoids (in particular, numerical semigroups), we introduce the following definition.

\begin{definition}
	Let $M$ be a monoid. We say that an element $m \in M \setminus M^\times$ is a \emph{molecule} provided that $|\mathsf{Z}(m)| = 1$. The set of all molecules of $M$ is denoted by $\mathcal{M}(M)$.
\end{definition}

It is clear that the set of atoms of any monoid is contained in the set of molecules. However, such an inclusion might be proper (consider, for instance, the additive monoid $\nn_0$). In addition, for any atomic monoid $M$ the set $\mathcal{M}(M)$ is \emph{divisor-closed} in the sense that if $m \in \mathcal{M}(M)$ and $m' \mid_M m$ for some $m' \in M \setminus M^\times$, then $m' \in \mathcal{M}(M)$. If the condition of atomicity is dropped, then this observation is not necessarily true (see Example~\ref{ex:PM whose set of molecules is not divisor-closed}).
\medskip

\section{Molecules of Numerical Semigroups}
\label{sec:molecules of NS}

In this section we study the sets of molecules of numerical semigroups, putting particular emphasis on their possible cardinalities. 

\begin{definition}
	A \emph{numerical semigroup} is a cofinite additive submonoid of $\nn_0$.
\end{definition}

\noindent We let $\mathcal{N}$ denote the class consisting of all numerical semigroups (up to isomorphism). We say that $N \in \mathcal{N}$ is \emph{nontrivial} if $\nn_0 \! \setminus \! N$ is not empty, and we let $\mathcal{N}^\bullet$ denote the class of all nontrivial numerical semigroups. Every $N \in \mathcal{N}$ has a unique minimal set of generators $A$, which is finite. The cardinality of $A$ is called the \emph{embedding dimension} of $N$. Suppose that $N$ has embedding dimension $n$, and let $N = \langle a_1 , \dots, a_n \rangle$ (we always assume that $a_1 < \dots < a_n$). Then $\gcd(a_1, \dots, a_n) = 1$ and $\mathcal{A}(N) = \{a_1, \dots, a_n\}$. In particular, every numerical semigroup is atomic. When $N$ is nontrivial, the maximum of $\nn_0 \! \setminus \! N$ is called the \emph{Frobenius number} of $N$. Here we let $\mathsf{F}(N)$ denote the Frobenius number of~$N$. See \cite{GR09} for a friendly introduction to numerical semigroups.

\begin{example} \label{ex:NS with embedding dimension two}
	For $k \ge 1$, consider the numerical semigroup $N_1 = \langle 2, 21 \rangle$, whose molecules are depicted in Figure~\ref{fig:molecule of four NS}. It is not hard to see that $x \in N_1^\bullet$ is a molecule if and only if every factorization of $x$ contains at most one copy of $21$. Therefore
	\[
		\mathcal{M}(N_1) = \big\{2m + 21n : 0 \le m < 21, n \in \{0,1\}, \, \text{and} \, (m, n) \neq (0, 0)\big\}.
	\]
	In addition, if $2m + 21n = 2m' + 21n'$ for some $m,m' \in \{0,\dots, 20\}$ and $n,n' \in \{0,1\}$, then one can readily check that $m = m'$ and $n = n'$. Hence $|\mathcal{M}(N_1)| = 41$.
\end{example}
\smallskip

\subsection{Betty Elements} Let $N = \langle a_1, \dots, a_n \rangle$ be a minimally generated numerical semigroup. We always represent an element of $\mathsf{Z}(N)$ with an $n$-tuple $z = (c_1, \dots, c_n) \in \nn^n_0$, where the entry $c_i$ specifies the number of copies of $a_i$ that appear in $z$. Clearly, $|z| = c_1 + \dots + c_n$. Given factorizations $z = (c_1, \dots, c_n)$ and $z' = (c'_1, \dots, c'_n)$, we define
\[
	\gcd(z,z') = (\min\{c_1,c'_1\}, \dots, \min\{c_n,c'_n\}).
\]
The \emph{factorization graph} of $x \in N$, denoted by $\nabla\!_x(N)$ (or just $\nabla\!_x$ when no risk of confusion exists), is the graph with vertices $\mathsf{Z}(x)$ and edges between those $z, z' \in \mathsf{Z}(x)$ satisfying that $\gcd(z,z') \neq 0$. The element $x$ is called a \emph{Betti element} of $N$ provided that $\nabla\!_x$ is disconnected. The set of Betti elements of $N$ is denoted by $\text{Betti}(N)$.

\begin{example}
	Take $N$ to be the numerical semigroup $\langle 14, 16, 18, 21, 45 \rangle$. A computation in SAGE using the \texttt{numericalsgps GAP} package reveals that $N$ has nine Betti elements. In particular, $90 \in \text{Betti}(N)$. In Figure~\ref{fig:factorization graphs} one can see the disconnected factorization graph of the Betti element $90$ on the left and the connected factorization graph of the non-Betti element $84$ on the right. 
	\begin{figure}
		\centering
		\includegraphics[width = 5.4cm]{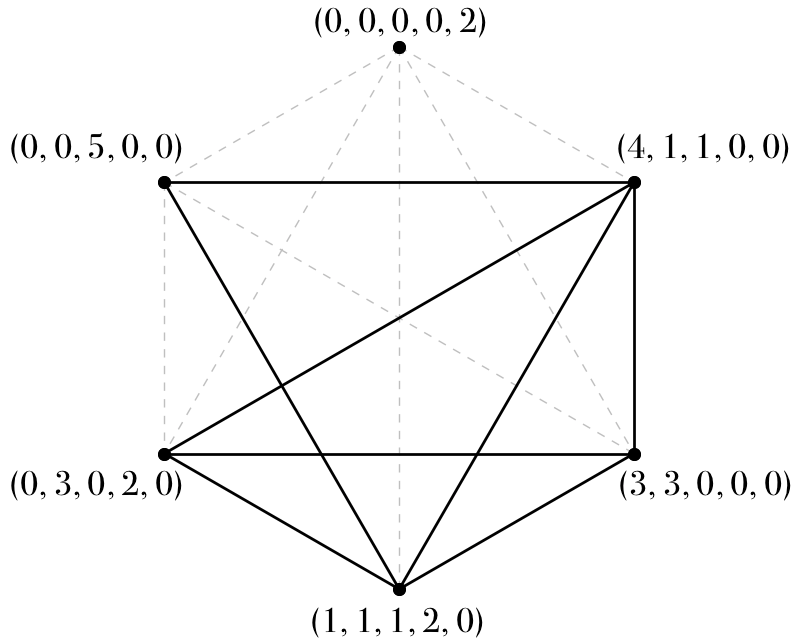}
		\hspace{8pt}
		\includegraphics[width = 5.4cm]{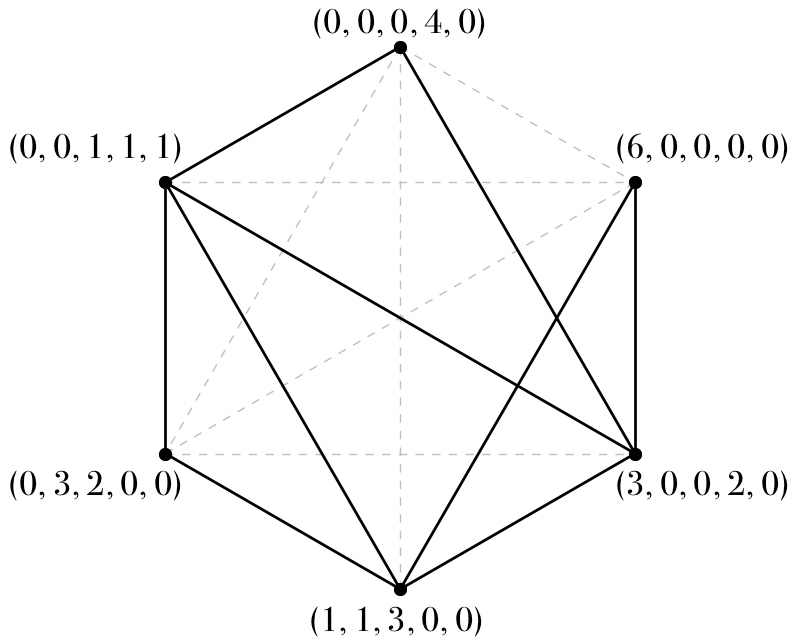}
		\caption{The factorization graphs of $90 \in \text{Betti}(N)$ (on the left) and $84 \notin \text{Betti}(N)$ (on the right), where $N$ is the numerical semigroup $\langle 14,16,18,21,45 \rangle$.}
		\label{fig:factorization graphs}
	\end{figure}
\end{example}

Observe that $0 \notin \text{Betti}(N)$ since $|\mathsf{Z}(0)| = 1$. It is well known that every numerical semigroup has finitely many Betti elements. Betti elements play a fundamental role in the study of uniquely-presented numerical semigroups \cite{GO10} and the study of \emph{delta sets} of BF-monoids \cite{CGLMS12}. For a more general notion of Betti element, meaning the \emph{syzygies} of an $\nn^n$-graded module, see~\cite{MS04}. In a numerical semigroup, Betti elements and molecules are closely related.

\begin{remark}
	Let $N$ be a numerical semigroup. An element $m \in N$ is a molecule if and only if $\beta \nmid_N m$ for any $\beta \in \text{Betti}(N)$.
\end{remark}

\begin{proof}
	For the direct implication, suppose that $m$ is a molecule of $N$ and take $\alpha \in N$ such that $\alpha \mid_N m$. As the set of molecules is closed under division, $|\mathsf{Z}(\alpha)| = 1$. This implies that $\nabla \!_\alpha$ is connected and, therefore, $\alpha$ cannot be a Betti element. The reverse implication is just a rephrasing of \cite[Lemma~1]{GO10}.
\end{proof}
\medskip

\subsection{On the Sizes of the Sets of Molecules} Obviously, for every $n \in \nn$ there exists a numerical semigroup having exactly $n$ atoms. The next proposition answers the same realization question replacing the concept of an atom by that of a molecule. Recall that $\mathcal{N}^\bullet$ denotes the class of all nontrivial numerical semigroups.

\begin{prop} \label{prop:realization for the size of the sets of molecules}
	$\{|\mathcal{M}(N)| : N \in \mathcal{N}^\bullet\} = \nn_{\ge 4}$.
\end{prop}

\begin{proof}
	Let $N$ be a nontrivial numerical semigroup. Then $N$ must contain at least two atoms. Let $a$ and $b$ denote the two smallest atoms of $N$, and assume that $a < b$. Note that $2a$ and $a+b$ are distinct molecules that are not atoms. Hence $|\mathcal{M}(N)| \ge 4$. As a result, $\{|\mathcal{M}(N)| : N \in \mathcal{N}^\bullet\} \subseteq \nn_{\ge 4} \cup \{ \infty \}$. Now take $x \in \nn$ with $x > \mathsf{F}(N) + a b$. Since $x' := x - a b > \mathsf{F}(N)$, we see that $x' \in N$ and, therefore, $\mathsf{Z}(x')$ contains at least one factorization, namely $z$. So we can find two distinct factorizations of $x$ by adding to~$z$ either $a$ copies of $b$ or $b$ copies of $a$. Then $\mathsf{F}(N) + a b$ is an upper bound for $\mathcal{M}(N)$, which means that $|\mathcal{M}(N)| \in \nn_{\ge 4}$. Thus, $\{|\mathcal{M}(N)| : N \in \mathcal{N}^\bullet\} \subseteq \nn_{\ge 4}$.
	
	To argue the reverse inclusion, suppose that $n \in \nn_{\ge 4}$, and let us find $N \in \mathcal{N}$ with $|\mathcal{M}(N)| = n$. For $n = 4$, we can take the numerical semigroup $\langle 2, 3 \rangle$ (see Figure~\ref{fig:molecule of four NS}). For $n > 4$, consider the numerical semigroup
	\[
		N = \langle n-2, n-1, \dots, 2(n-2)-1 \rangle.
	\]
	It follows immediately that $\mathcal{A}(N) = \{n-2, n-1, \dots, 2(n-2)-1\}$. In addition, it is not hard to see that $2(n-2), 2(n-2)+1 \in \mathcal{M}(N)$ while $k \notin \mathcal{M}(N)$ for any $k > 2(n-2)+1$. Consequently, $\mathcal{M}(N) = \mathcal{A}(N) \cup \{2(n-2), 2(n-2)+1\}$, which implies that $|\mathcal{M}(N)| = n$. Therefore $\{|\mathcal{M}(N)| : N \in \mathcal{N}\} \supseteq \nn_{\ge 4}$, which completes the proof.
\end{proof}

\begin{cor}
	The monoid $(\nn_0,+)$ is the only numerical semigroup having infinitely many molecules.
\end{cor}

In Proposition~\ref{prop:realization for the size of the sets of molecules} we have fully described the set $\{|\mathcal{M}(N)| : N \in \mathcal{N} \}$. A full description of the set $\{|\mathcal{M}(N) \setminus \mathcal{A}(N)| : N \in \mathcal{N} \}$ seems to be significantly more involved. However, the next theorem offers some evidence to believe that
\[
	\{|\mathcal{M}(N) \setminus \mathcal{A}(N)| : N \in \mathcal{N} \} = \nn_{\ge 2} \cup \{\infty\}.
\]

\begin{theorem} \label{thm:more molecules than atoms in any NS}
	The following statements hold.
	\begin{enumerate}
		\item $\{|\mathcal{M}(N) \setminus \mathcal{A}(N)| : N \in \mathcal{N}^\bullet \} \subseteq \nn_{\ge 2}$.
		\vspace{3pt}
		\item $|\mathcal{M}(N) \! \setminus \! \mathcal{A}(N)| = 2$ for infinitely many numerical semigroups $N$.
		\vspace{3pt}
		\item For each $k \in \nn$, there is a numerical semigroup $N$ with $|\mathcal{M}(N) \! \setminus \! \mathcal{A}(N)| > k$.
	\end{enumerate}
\end{theorem}

\begin{proof}
	To prove~(1), take $N \in \mathcal{N}^\bullet$. Then we can assume that $N$ has embedding dimension $n$ with $n \ge 2$. Take $a_1, \dots, a_n \in \nn$ such that $a_1 < \dots < a_n$ such that $N = \langle a_1, \dots, a_n \rangle$. Since $a_1 < a_2 < a_j$ for every $j = 3, \dots,n$, the elements $2a_1$ and $a_1 + a_2$ are two distinct molecules of $N$ that are not atoms. Hence $\mathcal{M}(N) \setminus \mathcal{A}(N) \subseteq \nn_{\ge 2} \cup \{\infty\}$. On the other hand, Proposition~\ref{prop:realization for the size of the sets of molecules} guarantees that $|\mathcal{M}(N)| < \infty$, which implies that $|\mathcal{M}(N) \setminus \mathcal{A}(N)| < \infty$. As a result, the statement~(1) follows.
	
	To verify the statement~(2), one only needs to consider for every $n \in \nn$ the numerical semigroup $N_n := \{0\} \cup \nn_{\ge n-2}$. The minimal set of generators of $N_n$ is the $(n-2)$-element set $\{n-2, n-1, \dots, 2(n-2)-1 \}$ and, as we have already argued in the proof of Proposition~\ref{prop:realization for the size of the sets of molecules}, the set $\mathcal{M}(N_n) \! \setminus \! \mathcal{A}(N_n)$ consists precisely of two elements.
	
	Finally, let us prove condition~(3). To do this, we first argue that for any $a,b \in \nn_{\ge 2}$ with $\gcd(a,b) = 1$ the numerical semigroup $\langle a, b \rangle$ has exactly $ab-1$ molecules (cf. Example~\ref{ex:NS with embedding dimension two}). Assume $a < b$, take $N := \langle a, b \rangle $, and set
	\[
		\mathcal{M} = \{ma + nb : 0 \le m < b, \, 0 \le n < a, \, \text{and} \ \, (m, n) \neq (0, 0)\}.
	\]
	Now take $x \in N$ to be a molecule of $N$. As $|\mathsf{Z}(x)| = 1$, the unique factorization $z := (c_1, c_2) \in \mathsf{Z}(x)$ (with $c_1,c_2 \in \nn_0$) satisfies that $c_1 < b$; otherwise, we could exchange~$b$ copies of the atom $a$ by $a$ copies of the atom $b$ to obtain another factorization of $x$. A similar argument ensures that $c_2 < a$. As a consequence, $\mathcal{M}(N) \subseteq \mathcal{M}$. On the other hand, if $ma + nb = m'a + n'b$ for some $m,m',n,n' \in \nn_0$, then $\gcd(a,b) = 1$ implies that $b \mid m-m'$ and $a \mid n-n'$. Because of this observation, the element $(b-1)a + (a-1)b$ has only the obvious factorization, namely $(b-1,a-1)$. Since $(b-1)a + (a-1)b$ is a molecule satisfying that $y \mid_N (b-1)a + (a-1)b$ for every $y \in \mathcal{M}$, the inclusion $\mathcal{M} \subseteq \mathcal{M}(N)$ holds. Hence $|\mathcal{M}(N)| = |\mathcal{M}| = ab-1$. To argue the statement~(3) now, it suffices to take $N := \langle 2, 2k+1 \rangle$.
\end{proof}
\medskip

We conclude this section with the following conjecture.

\begin{conj} \label{conj:molecules minus atoms in NS}
	For every $n \in \nn_{\ge 2}$, there exists a numerical semigroup $N$ such that $|\mathcal{M}(N) \! \setminus \! \mathcal{A}(N)| = n$.
\end{conj}
\medskip

\section{Molecules of Puiseux Monoids}
\label{sec:molecules of PM}
\smallskip

\subsection{Molecules of Generic Puiseux Monoids} In this section we study the sets of molecules of Puiseux monoids. We will argue that there are infinitely many non-finitely generated atomic Puiseux monoids~$P$ such that $|\mathcal{M}(P) \setminus \mathcal{A}(P)| = \infty$. On the other hand, we will prove that, unlike the case of numerical semigroups, there are infinitely many non-isomorphic atomic Puiseux monoids all whose molecules are, indeed, atoms. The last part of this section is dedicated to characterize the molecules of prime reciprocal Puiseux monoids.

\begin{definition}
	A \emph{Puiseux monoid} is an additive submonoid of $\qq_{\ge 0}$.
\end{definition}

\noindent Clearly, every numerical semigroup is naturally isomorphic to a Puiseux monoid. However, Puiseux monoids are not necessarily finitely generated or atomic, as the next example illustrates. The atomic structure of Puiseux monoids has been investigated recently~\cite{fG19,fG17,GG18}. At the end of Section~\ref{sec:background} we mentioned that the set of molecules of an atomic monoid is divisor-closed. The next example indicates that this property may not hold for non-atomic monoids.

\begin{example} \label{ex:PM whose set of molecules is not divisor-closed}
	Consider the Puiseux monoid
	\[
		P = \bigg\langle \frac 25, \frac 35, \frac 1{2^n} \, : \, n \in \nn \bigg\rangle.
	\]
	First, observe that $0$ is a limit point of $P^\bullet$, and so $P$ cannot be finitely generated. After a few easy verifications, one can see that $\mathcal{A}(P) = \{2/5, 3/5\}$. On the other hand, it is clear that $1/2 \notin \langle 2/5,3/5 \rangle$, so $P$ is not atomic. Observe now that~$\mathsf{Z}(1)$ contains only one factorization, namely $2/5 + 3/5$. Therefore $1 \in \mathcal{M}(P)$. Since $\mathsf{Z}(1/2)$ is empty, $1/2$ is not a molecule of $P$. However, $1/2 \mid_P 1$. As a result, $\mathcal{M}(P)$ is not divisor-closed.
\end{example}

Although the additive monoid $\nn_0$ contains only one atom, it has infinitely many molecules. The next result implies that $\nn_0$ is basically the only atomic Puiseux monoid having finitely many atoms and infinitely many molecules. 

\begin{prop} \label{prop:atoms-molecules cardinality for PM}
	Let $P$ be a Puiseux monoid. Then $|\mathcal{M}(P)| \in \nn_{\ge 2}$ if and only if $|\mathcal{A}(P)| \in \nn_{\ge 2}$. 
\end{prop}

\begin{proof}
	Suppose first that $|\mathcal{M}(P)| \in \nn_{\ge 2}$. As every atom is a molecule, $\mathcal{A}(P)$ is finite. Furthermore, note that if $\mathcal{A}(P) = \{a\}$, then every element of the set $S = \{na : n \in \nn\}$ would be a molecule, which is not possible as $|S| = \infty$. As a result, $|\mathcal{A}(P)| \in \nn_{\ge 2}$. Conversely, suppose that $|\mathcal{A}(P)| \in \nn_{\ge 2}$. Since the elements in $P \! \setminus \! \langle \mathcal{A}(P) \rangle$ have no factorizations, $\mathcal{M}(P) = \mathcal{M}(\langle \mathcal{A}(P) \rangle)$. Therefore there is no loss in assuming that $P$ is atomic. As $1 < |\mathcal{A}(P)| < \infty$, the monoid $P$ is isomorphic to a nontrivial numerical semigroup. The proposition now follows from the fact that nontrivial numerical semigroups have finitely many molecules.
\end{proof}

\begin{cor}
	If $P$ is a Puiseux monoid, then $|\mathcal{M}(P)| \neq 1$.
\end{cor}

The set of atoms of a numerical semigroup is always strictly contained in its set of molecules. However, there are many atomic Puiseux monoids which do not satisfy such a property. Before proceeding to formalize this observation, let us mention that if two Puiseux monoids $P$ and $P'$ are isomorphic, then there exists $q \in \qq_{>0}$ such that $P' = qP$; this is a consequence of~\cite[Proposition~3.2(1)]{fG18a}.

\begin{theorem}[cf. Theorem~\ref{thm:more molecules than atoms in any NS}(1)] \label{thm:F(M) equals A(M)}
	There are infinitely many non-isomorphic atomic Puiseux monoids $P$ satisfying that $\mathcal{M}(P) = \mathcal{A}(P)$.
\end{theorem}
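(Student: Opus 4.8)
The plan is to produce an explicit infinite family of pairwise non-isomorphic Puiseux monoids, each satisfying $\mathcal{M}(P)=\mathcal{A}(P)$. First I would note what is \emph{not} available: as recorded in the proof of Proposition~\ref{prop:atoms-molecules cardinality for PM} one has $\mathcal{M}(P)=\mathcal{M}(\langle\mathcal{A}(P)\rangle)$, and one checks easily that $\mathcal{A}(\langle\mathcal{A}(P)\rangle)=\mathcal{A}(P)$; so if $|\mathcal{A}(P)|<\infty$ then, up to isomorphism, $\langle\mathcal{A}(P)\rangle$ is trivial, a copy of $\nn_0$, or a nontrivial numerical semigroup, and in the last two cases Remark~\ref{rmk:more molecules than atoms in any NS} together with the paragraph on $\nn_0$ forbids $\mathcal{M}=\mathcal{A}$. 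Hence the family must consist of monoids having either no atoms or infinitely many atoms, and the cleanest route is to take atomless ones.

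Concretely, for each prime $p$ I would set $P_p := \langle 1/p^n : n\in\nn\rangle$, which one identifies with the set of all nonnegative rationals whose denominator is a power of $p$. The verification that $\mathcal{M}(P_p)=\mathcal{A}(P_p)$ is short: writing a given element of $P_p^\bullet$ in lowest terms as $m/p^k$, if $m\ge 2$ then $m/p^k=(m-1)/p^k+1/p^k$, while if $m=1$ then $1/p^k=1/p^{k+1}+(p-1)/p^{k+1}$; in either case the element is a sum of two nonzero elements of $P_p$, so $\mathcal{A}(P_p)=\emptyset$. Being atomless, $P_p$ admits no factorizations and hence no molecules, giving $\mathcal{M}(P_p)=\emptyset=\mathcal{A}(P_p)$.

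The remaining—and most delicate—step is showing $P_p\not\cong P_q$ for distinct primes $p,q$. By Proposition~\ref{prop:isomorphism between PM} an isomorphism $P_p\to P_q$ would be multiplication by some $r=a/b\in\qq_{>0}$ with $\gcd(a,b)=1$, so $rP_p=P_q$. From $r\cdot 1\in P_q$ we get $b\mid q^s$ for some $s\in\nn_0$, and applying the same reasoning to $r^{-1}$ gives $a\mid p^t$ for some $t$; thus $r=p^t/q^s$ and $rP_p=(1/q^s)P_p$, which contains $1/(q^sp)$. Since $p\ne q$, this rational does not lie in $P_q$, a contradiction, so the $P_p$ are pairwise non-isomorphic and the theorem follows. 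I expect this non-isomorphism argument to be the real content—morally the invariant is the set of primes dividing the denominators of infinitely many elements of the monoid, which is precisely the sort of data the $\mathsf{d}$ and $\mathsf{D}_S$ notation is set up to track. If one instead wants the family to have $\mathcal{A}(P)=\mathcal{M}(P)$ \emph{nonempty}, the overall strategy is the same but the construction is harder: one needs infinitely many atoms accumulating at $0$ whose denominators share primes richly, since (as with primary monoids, studied later) an atom $a$ whose denominator carries a prime $p$ dividing no other atom's denominator forces, via a $p$-adic valuation argument, the element $2a$ to have the unique factorization $2\mathbf{a}$ and hence $2a\in\mathcal{M}\setminus\mathcal{A}$; the difficulty there is to guarantee a second factorization of every reducible element while keeping all generators irreducible.
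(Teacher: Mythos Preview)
Your proof is correct but takes a different and considerably simpler route than the paper's. You exploit the observation that in an atomless Puiseux monoid both $\mathcal{A}(P)$ and $\mathcal{M}(P)$ are empty, so the equality $\mathcal{M}(P)=\mathcal{A}(P)$ holds vacuously; the monoids $P_p=\zz[1/p]_{\ge 0}$ furnish an infinite family of such examples, and Proposition~\ref{prop:isomorphism between PM} makes the non-isomorphism check a short denominator argument. The paper, by contrast, constructs \emph{atomic} monoids with $\mathcal{M}(P)=\mathcal{A}(P)$ nonempty and infinite: for each infinite set $S$ of primes it builds an ascending union $P_S=\bigcup_k P_k$ where, at each stage, one adjoins for every sum $a+a'$ of two atoms a new generator $(a+a')/p'$ with $p'$ a fresh prime from $S$, thereby forcing a second factorization of $a+a'$, while a $p'$-adic valuation argument shows the old atoms survive as atoms. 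Your approach proves the theorem as literally stated with minimal effort; the paper's approach is substantially harder but delivers the stronger and arguably intended conclusion---that the phenomenon occurs nontrivially among atomic monoids, in genuine contrast to Remark~\ref{rmk:more molecules than atoms in any NS}---which is exactly the difficulty you correctly anticipate in your final paragraph.
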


\begin{proof}
	Let $\mathcal{S} = \{S_n : n \in \nn\}$ be a collection of infinite and pairwise-disjoint sets of primes. Now take $S = S_n$ for some arbitrary $n \in \nn$, and label the primes in $S$ strictly increasingly by $p_1, p_2, \dots$. Recall that $\mathsf{D}_S(r)$ denotes the set of primes in $S$ dividing $\mathsf{d}(r)$ and that $\mathsf{D}_S(R) = \cup_{r \in R} \mathsf{D}_S(r)$ for $R \subseteq \qq_{> 0}$. We proceed to construct a Puiseux monoid $P_S$ satisfying that $\mathsf{D}_S(P_S) = S$.
	
	Take $P_1 := \langle 1/p_1 \rangle$ and $P_2 := \langle P_1, 2/(p_1p_2) \rangle$. In general, suppose that $P_k$ is a finitely generated Puiseux monoid such that $\mathsf{D}_S(P_k) \subset S$, and let $r_1, \dots, r_{n_k}$ be all the elements in $P_k$ which can be written as a sum of two atoms. Clearly, $n_k \ge 1$. Because $|S| = \infty$, one can take $p'_1, \dots, p'_{n_k}$ to be primes in $S \! \setminus \! \mathsf{D}_S(P_k)$ satisfying that $p'_i \nmid \mathsf{n}(r_i)$. Now consider the following finitely generated Puiseux monoid
	\[
		P_{k+1} := \bigg\langle P_k \cup \bigg\{ \frac{r_1}{p'_1}, \dots, \frac{r_{n_k}}{p'_{n_k}} \bigg\} \bigg\rangle.
	\]
	For every $i \in \{1,\dots,n_k\}$, there is only one element in $P_k \cup \{r_1/p'_1, \dots, r_{n_k}/p'_{n_k}\}$ whose denominator is divisible by $p'_i$, namely $r_i/p'_i$. Therefore $r_i/p'_i \in \mathcal{A}(P_{k+1})$ for $i=1, \dots, n_k$. To check that $\mathcal{A}(P_k) \subset \mathcal{A}(P_{k+1})$, fix $a \in \mathcal{A}(P_k)$ and take
	\begin{equation} \label{eq:FM equals AM}
		z := \sum_{i=1}^m \alpha_i a_i + \sum_{i=1}^{n_k} \beta_i \frac{r_i}{p'_i} \in \mathsf{Z}_{P_{k+1}}(a),
	\end{equation}
	where $a_1, \dots, a_m$ are pairwise distinct atoms in $\mathcal{A}(P_{k+1}) \cap P_k$ and $\alpha_i, \beta_j$ are nonnegative coefficients for $i = 1,\dots,m$ and $j = 1,\dots, n_k$. In particular, $a_1, \dots, a_m \in \mathcal{A}(P_k)$. For each $i=1,\dots,n_k$, the fact that the $p'_i$-adic valuation of $a$ is nonnegative implies that $p'_i \mid \beta_i$. Hence
	\[
		a = \sum_{i=1}^m \alpha_i a_i + \sum_{i=1}^{n_k} \beta'_i r_i,
	\]
	where $\beta'_i = \beta_i/p'_i \in \nn_0$. Since $r_i \in \mathcal{A}(P_k) + \mathcal{A}(P_k)$ and $(\beta_i/p'_i)r_i \mid_{P_k} a$ for every $i=1,\dots,n_k$, one obtains that $\beta_1 = \dots = \beta_{n_k} = 0$. As a result, $a = \sum_{i=1}^m \alpha_i a_i$. Because $a \in \mathcal{A}(P_k)$, the factorization $\sum_{i=1}^m \alpha_i a_i$ in $\mathsf{Z}_{P_k}(a)$ must have length $1$, i.e, $\sum_{i=1}^m \alpha_i = 1$. Thus, $\sum_{i=1}^m \alpha_i + \sum_{i=1}^{n_k} \beta_i =~1$, which means that $z$ has length $1$ and so $a \in \mathcal{A}(P_{k+1})$. As a result, the inclusion $\mathcal{A}(P_k) \subseteq \mathcal{A}(P_{k+1})$ holds. Observe that because $n_k \ge 1$, the previous containment must be strict. Now set
	\[
		P_S = \bigcup_{k \in \nn} P_k.
	\]
	Let us verify that $P_S$ is an atomic monoid satisfying that $\mathcal{A}(P_S) = \cup_{k \in \nn} \mathcal{A}(P_k)$. Since $P_k$ is atomic for every $k \in \nn$, the inclusion chain $\mathcal{A}(P_1) \subset \mathcal{A}(P_2) \subset \dots$ implies that $P_1 \subset P_2 \subset \dots$. In addition, if $a_0 = a_1 + \dots + a_m$ for $m \in \nn$ and $a_0, a_1, \dots, a_m \in P_S$, then $a_0 = a_1 + \dots + a_m$ will also hold in $P_k$ for some $k \in \nn$ large enough. This immediately implies that $\cup_{k \in \nn} \mathcal{A}(P_k) \subseteq \mathcal{A}(P_S)$. Since the reverse inclusion follows trivially, $\mathcal{A}(P_S) = \cup_{k \in \nn} \mathcal{A}(P_k)$. To check that $P_S$ is atomic, take $x \in P_S^\bullet$. Then there exists $k \in \nn$ such that $x \in P_k$ and, because $P_k$ is atomic, $x \in \langle \mathcal{A}(P_k) \rangle \subseteq \langle \mathcal{A}(P_S) \rangle$. Hence $P_S$ is atomic.
	
	To check that $\mathcal{M}(P_S) = \mathcal{A}(P_S)$, suppose that $m$ is a molecule of $P_S$, and then take $K \in \nn$ such that $m \in P_k$ for every $k \ge K$. Since $\mathcal{A}(P_k) \subset \mathcal{A}(P_{k+1}) \subset \dots$, we obtain that $\mathsf{Z}_{P_k}(m) \subseteq \mathsf{Z}_{P_{k+1}}(m) \subseteq \dots$. Moreover, $\cup_{k \ge K} \mathcal{A}(P_k) = \mathcal{A}(P_S)$ implies that $\cup_{k \ge K} \mathsf{Z}_{P_k}(m) = \mathsf{Z}_{P_S}(m)$. Now suppose for a contradiction that $m = \sum_{j=1}^i a_j$ for $i \in \nn_{\ge 2}$, where $a_1, \dots, a_i \in \mathcal{A}(P_S)$. Take $j \in \nn_{\ge K}$ such that $a_1, \dots, a_i \in \mathcal{A}(P_j)$. Then the way in which $P_{j+1}$ was constructed ensures that $|\mathsf{Z}_{P_{j+1}}(a_1 + a_2)| \ge 2$ and, therefore, $|\mathsf{Z}_{P_{j+1}}(m)| \ge 2$. As $\mathsf{Z}_{P_{j+1}}(m) \subseteq \mathsf{Z}_{P_S}(m)$, it follows that $|\mathsf{Z}_{P_S}(m)| \ge 2$, which contradicts that $m$ is a molecule. Hence $\mathcal{M}(P_S) = \mathcal{A}(P_S)$.
	
	Finally, we argue that the monoids constructed are not isomorphic. Let $S$ and $S'$ be two distinct members of the collection $\mathcal{S}$ and suppose, by way of contradiction, that $\psi \colon P_S \to P_{S'}$ is a monoid isomorphism. Because the only homomorphisms of Puiseux monoids are given by rational multiplication, there exists $q \in \qq_{> 0}$ such that $P_{S'} = q P_S$. In this case, all but finitely many primes in $\mathsf{D}_\pp(P_S)$ belong to $\mathsf{D}_\pp(P_{S'})$. Since $\mathsf{D}_\pp(P_S) \cap \mathsf{D}_\pp(P_{S'}) = \emptyset$ when $S \neq S'$, we get a contradiction.
\end{proof}
\medskip

\subsection{Molecules of Prime Reciprocal Monoids} For the remaining of this section, we focus our attention on the class consisting of all prime reciprocal monoids.

\begin{definition}
	Let $S$ be a nonempty set of primes. A Puiseux monoid $P$ is \emph{prime reciprocal over} $S$ if there exists a set of positive rationals $R$ such that $P = \langle R \rangle$, $\mathsf{d}(R) = S$, and $\mathsf{d}(r) = \mathsf{d}(r')$ implies $r = r'$ for all $r,r' \in R$.
\end{definition}

Within the scope of this paper, the term \emph{prime reciprocal monoid} refers to a Puiseux monoid that is prime reciprocal over some nonempty set of primes. Let us remark that if a Puiseux monoid $P$ is prime reciprocal, then there exists a \emph{unique} $S \subseteq \pp$ such that $P$ is prime reciprocal over $S$. It is easy to verify that every prime reciprocal Puiseux monoid is atomic.

\begin{prop}[cf. Theorem~\ref{thm:more molecules than atoms in any NS}(1)] \label{prop:PPM has infinitely many molecules that are not atoms}
	There exist infinitely many non-finitely generated atomic Puiseux monoids $P$ such that $|\mathcal{M}(P) \! \setminus \! \mathcal{A}(P)| = \infty$.
\end{prop}

\begin{proof}
	As in the proof of Theorem~\ref{thm:F(M) equals A(M)}, let $\mathcal{S} = \{S_n : n \in \nn\}$ be a collection of infinite and pairwise-disjoint subsets of $\pp \setminus \{2\}$. For every $n \in \nn$, let $P_n$ be a prime reciprocal Puiseux monoid over $S_n$. Fix $a \in \mathcal{A}(P_n)$, and take a factorization
	\[	
		z := \sum_{i=1}^k \alpha_i a_i  \in \mathsf{Z}(2a)
	\]
	for some $k \in \nn$, pairwise distinct atoms $a_1, \dots, a_k$, and $\alpha_1,\dots,\alpha_k \in \nn_0$. Since $\mathsf{d}(a) \neq 2$, after applying the $\mathsf{d}(a)$-adic valuation on both sides of the equality $2a = \sum_{i=1}^t \alpha_i a_i$, one obtains that $z = 2a$. So $2a \in \mathcal{M}(P_n) \setminus \mathcal{A}(P_n)$ and, as a result, $|\mathcal{M}(P_n) \setminus \mathcal{A}(P_n)| = \infty$. Now suppose, by way of contradiction, that $P_i \cong P_j$ for some $i,j \in \nn$ with $i \neq j$. Since isomorphisms of Puiseux monoids are given by rational multiplication, there exists $q \in \qq_{> 0}$ such that $P_j = q P_i$. However, this implies that only finitely many primes in $\mathsf{d}(P_i)$ are not contained in $\mathsf{d}(P_j)$, which contradicts that $S_i \cap S_j = \emptyset$. Hence no two monoids in $\{P_n : n \in \nn\}$ are isomorphic, and the proposition follows.
\end{proof}

Theorem~\ref{thm:F(M) equals A(M)} and Proposition~\ref{prop:PPM has infinitely many molecules that are not atoms} guarantee the existence of infinitely many non-finitely generated atomic Puiseux monoids~$P$ and~$Q$ with $|\mathcal{M}(P) \! \setminus \! \mathcal{A}(P)| = 0$ and $|\mathcal{M}(Q) \! \setminus \! \mathcal{A}(Q)| = \infty$. 

\begin{conj} (cf. Conjecture~\ref{conj:molecules minus atoms in NS}) \label{q:molecules minus atoms in PM}
	For every $n \in \nn$ there exists a non-finitely generated atomic Puiseux monoid $P$ satisfying that $|\mathcal{M}(P) \! \setminus \! \mathcal{A}(P)| = n$.
\end{conj}
\medskip

Before characterizing the molecules of prime reciprocal monoids, let us introduce the concept of maximal multiplicity. Let $P$ be a Puiseux monoid. For $x \in P$ and $a \in \mathcal{A}(P)$ we define the \emph{maximal multiplicity} of $a$ in $x$ to be
\[
	\mathsf{m}(a,x) := \max\{n \in \nn_0 : na \mid_P x\}.
\]

\begin{prop} \label{prop:sufficient condition for molecules in PPM}
	Let $P$ be a prime reciprocal monoid, and let $x \in P$. If $\mathsf{m}(a,x) < \mathsf{d}(a)$ for all $a \in \mathcal{A}(P)$, then $x \in \mathcal{M}(P)$.
\end{prop}

\begin{proof}
	Suppose, by way of contradiction, that $x \notin \mathcal{M}(P)$. Then there exist $k \in \nn$, elements $\alpha_i, \beta_i \in \nn_0$ (for $i=1,\dots, k$), and pairwise distinct atoms $a_1, \dots, a_k$ such that
	\[
		z:= \sum_{i=1}^k \alpha_i a_i \ \text{ and } \ z' := \sum_{i=1}^k \beta_i a_i
	\]
	are two distinct factorizations in $\mathsf{Z}(x)$. As $z \neq z'$, there is an index $i \in \{1,\dots,k\}$ such that $\alpha_i \neq \beta_i$. Now we can apply the $\mathsf{d}(a_i)$-adic valuation to both sides of the equality
	\[
		\sum_{i=1}^k \alpha_i a_i = \sum_{i=1}^k \beta_i a_i
	\]
	to verify that $\mathsf{d}(a_i) \mid \beta_i - \alpha_i$. As $\alpha_i \neq \beta_i$, we obtain that
	\[
		\mathsf{m}(a_i, x) \ge \max\{\alpha_i, \beta_i\} \ge \mathsf{d}(a_i).
	\]
	However, this contradicts the fact that $\mathsf{m}(a,x) < \mathsf{d}(a)$ for all $a \in \mathcal{A}(P)$. As a consequence, $x \in \mathcal{M}(P)$.
\end{proof}

For $S \subseteq \pp$, we call the monoid $E_S := \langle 1/p : p \in S \rangle$ the \emph{elementary} prime reciprocal monoid over $S$; if $S = \pp$ we say that $E_S$ is \emph{the} elementary prime reciprocal monoid. It was proved in \cite[Section~5]{GG18} that every submonoid of the elementary prime reciprocal monoid is atomic. This gives a large class of non-finitely generated atomic Puiseux monoids, which contains each prime reciprocal monoid.

\begin{prop} \label{prop:factorial equivalence for elementary primary monoids}
	Let $S$ be an infinite set of primes, and let $E_S$ be the elementary prime reciprocal monoid over $S$. For $x \in E_S$, the following conditions are equivalent:
	\begin{enumerate}
		\item $x \in \mathcal{M}(E_S)$;
		\vspace{3pt}
		\item $1$ does not divide $x$ in $E_S$;
		\vspace{3pt}
		\item$\mathsf{m}(a,x) < \mathsf{d}(a)$ for all $a \in \mathcal{A}(E_S)$;
		\vspace{3pt}
		\item If $a_1, \dots, a_n \in \mathcal{A}(E_S)$ are distinct atoms and $\alpha_1, \dots, \alpha_n \in \nn_0$ satisfy that $\sum_{j=1}^n \alpha_j a_j \in \mathsf{Z}(x)$, then $\alpha_j < \mathsf{d}(a_j)$ for each $j = 1,\dots,n$.
	\end{enumerate}
\end{prop}

\begin{proof}
	First, let us recall that since $E_S$ is atomic, $\mathcal{M}(E_S)$ is divisor-closed. On the other hand, note that for any two distinct atoms $a,a' \in \mathcal{A}(E_S)$, both factorizations $\mathsf{d}(a) \, a$ and $\mathsf{d}(a') \, a'$ are in $\mathsf{Z}(1)$. Therefore $1 \notin \mathcal{M}(E_S)$. Because the set of molecules of $E_S$ is divisor-closed, $1 \nmid_{E_S} m$ for any $m \in \mathcal{M}(E_S)$; in particular, $1 \nmid_{E_S} x$. Thus, (1) implies (2). If $\mathsf{m}(a,x) \ge \mathsf{d}(a)$ for $a \in \mathcal{A}(E_S)$, then
	\[
		x = \mathsf{m}(a,x) \, a + y = 1 + (\mathsf{m}(a,x) - \mathsf{d}(a)) \, a + y
	\]
	for some $y \in E_S$. As a result, $1 \mid_{E_S} x$, from which we can conclude that~(2) implies (3). It is obvious that (3) and (4) are equivalent conditions. Finally, the fact that (3) implies~(1) follows from Proposition~\ref{prop:sufficient condition for molecules in PPM}.
\end{proof}

\begin{cor}
	Let $S$ be an infinite set of primes, and let $E_S$ be the elementary prime reciprocal monoid over $S$. Then $|\mathsf{Z}(x)| = \infty$ for all $x \notin \mathcal{M}(E_S)$.
\end{cor}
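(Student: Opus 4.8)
\emph{Proof proposal.} The plan is to deduce the statement quickly from Proposition~\ref{prop:factorial equivalence for elementary primary monoids}. Recall first that $E_S$ is atomic (it is in fact hereditarily atomic), so every $x \in E_S^\bullet$ has at least one factorization and the cited proposition applies to it. Fix $x \in E_S^\bullet \setminus \mathcal{M}(E_S)$. Since condition~(1) of that proposition fails for $x$, so does condition~(2); that is, $1 \mid_{E_S} x$, so we may write $x = 1 + y$ for some $y \in E_S$.

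The key point is that $1$ itself has infinitely many factorizations in $E_S$: because $S$ is infinite, for each prime $p \in S$ the formal sum $p\,{\bf \frac 1p}$ lies in $\mathsf{Z}(1)$, and as $p$ runs over $S$ these are pairwise distinct elements of the free abelian monoid $\mathsf{Z}(E_S)$. Hence $|\mathsf{Z}(1)| = \infty$. Now, using atomicity again, fix some $w \in \mathsf{Z}(y)$. For every $z \in \mathsf{Z}(1)$ we have $\phi(z + w) = \phi(z) + \phi(w) = 1 + y = x$, so $z + w \in \mathsf{Z}(x)$; and since $\mathsf{Z}(E_S)$ is free on $\mathcal{A}(E_S)$, hence cancellative, the map $z \mapsto z + w$ is injective. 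Therefore $|\mathsf{Z}(x)| \ge |\mathsf{Z}(1)| = \infty$, as claimed.

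There is no serious obstacle here; the argument is essentially a one-line consequence of Proposition~\ref{prop:factorial equivalence for elementary primary monoids} together with the freeness of $\mathsf{Z}(E_S)$. The two points worth flagging are that the atomicity of $E_S$ is genuinely used — an element admitting no factorization would be a non-molecule with $|\mathsf{Z}(x)| = 0$, violating the conclusion — and that $x$ is implicitly understood to range over $E_S^\bullet$, since the zero element is a unit and falls outside the dichotomy in the cited proposition.
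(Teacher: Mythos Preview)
Your argument is correct and is exactly the intended one: the paper states this result as an unproved corollary of Proposition~\ref{prop:factorial equivalence for elementary primary monoids}, and the implicit reasoning is precisely what you wrote---failure of (1) forces $1 \mid_{E_S} x$, and then the observation $|\mathsf{Z}(1)| = \infty$ (already made in the proof of the proposition) transfers to $x$ via translation by a fixed factorization of the cofactor. Your remarks about atomicity and the tacit restriction to $x \in E_S^\bullet$ are well taken; the corollary as literally stated does not cover $x = 0$, and you are right to read it that way.
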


In order to describe the set of molecules of an arbitrary prime reciprocal monoid, we need to cast its atoms into two categories.

\begin{definition}
	Let $P$ be a prime reciprocal monoid. We say that $a \in \mathcal{A}(P)$ is \emph{stable} if the set $\{a' \in \mathcal{A}(P) : \mathsf{n}(a') = \mathsf{n}(a)\}$ is infinite, otherwise we say that $a$ is \emph{unstable}. If every atom of $P$ is stable (resp., unstable), then we call $P$ \emph{stable} (resp., \emph{unstable}).
\end{definition}

For a prime reciprocal monoid $P$, we let $\mathcal{S}(P)$ denote the submonoid of $P$ generated by the set of stable atoms. Similarly, we let $\mathcal{U}(P)$ denote the submonoid of $P$ generated by the set of unstable atoms. Clearly, $P$ is stable (resp., unstable) if and only if $P = \mathcal{S}(P)$ (resp., $P = \mathcal{U}(P)$). In addition, $P = \mathcal{S}(P) + \mathcal{U}(P)$, and $\mathcal{S}(P) \cap \mathcal{U}(P)$ is trivial only when either $\mathcal{S}(P)$ or $\mathcal{U}(P)$ is trivial. Clearly, if $P$ is stable, then it cannot be finitely generated. Finally, we say that $u \in \mathcal{U}(P)$ is \emph{absolutely unstable} provided that $u$ is not divisible by any stable atom in $P$, and we let $\mathcal{U}^a(P)$ denote the set of all absolutely unstable elements of~$P$.

\begin{example}
	Let $\{p_n\}$ be the strictly increasing sequence with underlying set $\pp \setminus \{2\}$, and consider the prime reciprocal monoid $P$ defined as
	\[
		P := \bigg\langle \frac{3 + (-1)^n}{p_{2n-1}}, \frac{p_{2n} - 1}{p_{2n}} \ : \ n \in \nn \bigg\rangle.
	\]
	Set $a_n = \frac{3 + (-1)^n}{p_{2n-1}}$ and $b_n =  \frac{p_{2n} - 1}{p_{2n}}$. One can readily verify that $P$ is an atomic monoid with $\mathcal{A}(P) = \{a_n, b_n : n \in \nn\}$. As both sets
	\[
		\{n \in \nn : \mathsf{n}(a_n) = 2\} \quad \text{ and } \quad \{n \in \nn : \mathsf{n}(a_n) = 4\}
	\]
	have infinite cardinality, $a_n$ is a stable atom for every $n \in \nn$. In addition, since $\{\mathsf{n}(b_n)\}$ is a strictly increasing sequence bounded below by $\mathsf{n}(b_1) = 4$ and $\mathsf{n}(a_n) \in \{2,4\}$, the element $b_n$ is an unstable atom for every $n \in \nn_{\ge 2}$. Also, notice that $4/3 = 2a_1 \in \mathcal{S}(P)$, but $4/3 \notin \mathcal{U}(P)$ because $\mathsf{d}(4/3) = 3 \notin \mathsf{d}(\mathcal{U}(P))$. Furthermore, for every $n \in \nn$ the element $u_n := (p_{2n} - 1) b_n \in \mathcal{U}(P)$ is not in $\mathcal{S}(P)$ because $p_{2n} = \mathsf{d}(u_n) \notin \mathsf{d}(\mathcal{S}(P))$. However, $\mathcal{S}(P) \cap \mathcal{U}(P) \ne \emptyset$ since the element $4 = 6 a_1 = 5 b_1$ belongs to both $\mathcal{S}(P)$ and $\mathcal{U}(P)$. Finally, we claim that $2b_n$ is absolutely unstable for every $n \in \nn$. If this were not the case, then $2b_k \notin \mathcal{M}(P)$ for some $k \in \nn$. By Proposition~\ref{prop:sufficient condition for molecules in PPM} there exists $a \in \mathcal{A}(P)$ such that $\mathsf{m}(a, 2b_k) \ge \mathsf{d}(a)$. In this case, one would obtain that $2b_k \ge \mathsf{m}(a,2b_k)a \ge \mathsf{d}(a)a = \mathsf{n}(a) \ge 2$, contradicting that $b_n < 1$ for every $n \in \nn$. Thus, $2b_n \in \mathcal{U}^a(P)$ for every $n \in \nn$.
\end{example}

\begin{prop} \label{prop: characterization of factorial elements in stable PPM}
	Let $P$ be a prime reciprocal monoid that is stable, and let $x \in P$. Then $x \in \mathcal{M}(P)$ if and only if~$\mathsf{n}(a)$ does not divide $x$ in $P$ for any $a \in \mathcal{A}(P)$.
\end{prop}

\begin{proof}
	For the direct implication, assume that $x \in \mathcal{M}(P)$ and suppose, by way of contradiction, that $\mathsf{n}(a) \mid_P x$ for some $a \in \mathcal{A}(P)$. Since $a$ is a stable atom, there exist $p_1, p_2 \in \pp$ with $p_1 \neq p_2$ such that $\gcd(p_1 p_2, \mathsf{n}(a)) = 1$ and $\mathsf{n}(a)/p_1, \mathsf{n}(a)/p_2 \in \mathcal{A}(P)$. As $\mathsf{n}(a) \mid_P x$, we can take $a_1, \dots, a_k \in \mathcal{A}(P)$ such that $x = \mathsf{n}(a) + a_1 + \dots + a_k$. Therefore
	\[
		p_1 \frac{\mathsf{n}(a)}{p_1} + a_1 + \dots + a_k \ \text{ and } \ p_2 \frac{\mathsf{n}(a)}{p_2} + a_1 + \dots + a_k
	\]
	are two distinct factorizations in $\mathsf{Z}(x)$, contradicting that $x$ is a molecule.
	Conversely, suppose that $x$ is not a molecule. Consider two distinct factorizations $z := \sum_{i=1}^k \alpha_i a_i$ and $z' := \sum_{i=1}^k \beta_i a_i$ in $\mathsf{Z}(x)$, where $k \in \nn$, $\alpha_i, \beta_i \in \nn_0$, and $a_1, \dots, a_k \in \mathcal{A}(P)$ are pairwise distinct atoms. Pick an index $j \in \{1, \dots, k\}$ such that $\alpha_j \neq \beta_j$ and assume, without loss of generality, that $\alpha_j < \beta_j$. After applying the $\mathsf{d}(a_j)$-adic valuations on both sides of the equality
	\[
		\sum_{i=1}^k \alpha_i a_i = \sum_{i=1}^k \beta_i a_i
	\]
	one finds that the prime $\mathsf{d}(a_j)$ divides $\beta_j - \alpha_j$. Therefore $\beta_j > \mathsf{d}(a_j)$ and so
	\[
		x = \mathsf{n}(a_j) + (\beta_j - \mathsf{d}(a_j))a_j + \sum_{i \neq j} \alpha_i a_i.
	\vspace{-6pt}
	\]
	Hence $\mathsf{n}(a_j) \mid_P x$, which concludes the proof.
\end{proof}

Observe that the reverse implication of Proposition~\ref{prop: characterization of factorial elements in stable PPM} does not require that the equality $\mathcal{S}(P) = P$ holds. However, the stability of $P$ is required for the direct implication to hold as the following example illustrates.

\begin{example}
	Let $\{p_n\}$ be the strictly increasing sequence with underlying set $\pp \setminus \{2\}$, and consider the unstable prime reciprocal monoid
	\[
		P := \bigg\langle \frac 12, \frac{p_n^2 -1}{p_n} \, : \, n \in \nn \bigg\rangle.
	\]
	Because the smallest two atoms of $P$ are $1/2$ and $8/3$, it immediately follows that $m := 2(1/2) + 8/3 \notin \langle 1/2 \rangle$ must be a molecule of $P$. In addition, notice that $1 = \mathsf{n}(1/2)$ divides $m$ in $P$.
\end{example}

We conclude this section characterizing the molecules of prime reciprocal monoids.

\begin{theorem} \label{thm:characterization of factorial elements in PPM}
	Let $P$ be a prime reciprocal monoid. Then $x \in P$ is a molecule if and only if $x = s + u$ for some $s \in \mathcal{S}(P) \cap \mathcal{M}(P)$ and $u \in \mathcal{U}^a(P) \cap \mathcal{M}(P)$.
\end{theorem}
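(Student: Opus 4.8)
The plan is to prove both implications, the forward one being short. Throughout I would rely on three facts about a primary monoid $P$: (i) every atom of $P$ has prime denominator and distinct atoms have distinct denominators, so in particular no prime divides the denominator of a stable atom and the denominator of an unstable atom simultaneously (call this \emph{denominator separation}), and the atoms of $\mathcal{S}(P)$, resp.\ $\mathcal{U}(P)$, are exactly the stable, resp.\ unstable, atoms of $P$; (ii) $P$ is atomic, hence $\mathcal{M}(P)$ is divisor-closed; (iii) consequently, if $m$ is a molecule and $m = m_1 + m_2$ in $P$, then $m_1, m_2 \in \mathcal{M}(P)$ and the unique factorizations of $m_1$ and $m_2$ are sub-factorizations of the one of $m$. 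For the forward implication, let $x \in \mathcal{M}(P)$ with unique factorization $z$, split $z = z_1 + z_2$ into its stable part $z_1$ and unstable part $z_2$, and set $s = \pi(z_1) \in \mathcal{S}(P)$ and $u = \pi(z_2) \in \mathcal{U}(P)$, so $x = s + u$. By (iii), $s, u \in \mathcal{M}(P)$ and $z_2$ is the unique factorization of $u$; if a stable atom $a$ divided $u$ in $P$, then appending a factorization of $u - a$ to $\mathbf{a}$ would produce a factorization of $u$ containing the stable atom $a$, contradicting uniqueness. Hence $u \in \mathcal{U}^a(P) \cap \mathcal{M}(P)$ and $s \in \mathcal{S}(P) \cap \mathcal{M}(P)$.

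For the converse, assume $x = s + u$ with $s \in \mathcal{S}(P) \cap \mathcal{M}(P)$ and $u \in \mathcal{U}^a(P) \cap \mathcal{M}(P)$, and let $z_s$, $z_u$ be the (pure stable, resp.\ pure unstable) unique factorizations of $s$, $u$; then $z_s + z_u \in \mathsf{Z}(x)$, and I want to show it is the only factorization, arguing by induction on $|z_s| + |z_u|$. Suppose $z \in \mathsf{Z}(x)$ with $z \neq z_s + z_u$, put $w = \gcd(z, z_s + z_u)$, and write $z = w + z^*$, $z_s + z_u = w + v$, so that $z^* \neq 0 \neq v$, $\gcd(z^*, v) = 0$, and $\pi(z^*) = \pi(v)$. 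Splitting $z^*$ and $v$ into stable and unstable parts and applying denominator separation to $\pi(z^*) = \pi(v)$ produces an integer $k$ with $\pi(z^*_{\mathrm{st}}) = \pi(v_{\mathrm{st}}) + k$ and $\pi(v_{\mathrm{un}}) = \pi(z^*_{\mathrm{un}}) + k$. Since $v_{\mathrm{st}} \le z_s$ and $v_{\mathrm{un}} \le z_u$, we get $\pi(v_{\mathrm{st}}) \in \mathcal{S}(P) \cap \mathcal{M}(P)$ and, as $\pi(v_{\mathrm{un}})$ divides the absolutely unstable element $u$, also $\pi(v_{\mathrm{un}}) \in \mathcal{U}^a(P) \cap \mathcal{M}(P)$. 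If $k = 0$, then since $z^*$ and $v$ are nonzero with disjoint support, one of $\pi(v_{\mathrm{st}}), \pi(v_{\mathrm{un}})$ acquires two distinct factorizations, contradicting that it is a molecule. If $k > 0$ (the case $k < 0$ being symmetric in the stable and unstable data), then $u - \pi(v_{\mathrm{un}}) = \pi(z_u - v_{\mathrm{un}}) \in P$ forces $k \mid_P u$, so $k \in \mathcal{M}(P)$, and since $u$ is absolutely unstable the unique factorization of $k$ is pure unstable, whence $k \in \mathcal{U}^a(P) \cap \mathcal{M}(P)$. Uniqueness of the factorization of the molecule $\pi(v_{\mathrm{un}}) = \pi(z^*_{\mathrm{un}}) + k$ then gives $z^*_{\mathrm{un}} \le v_{\mathrm{un}}$, hence $z^*_{\mathrm{un}} = 0$ by disjointness; and $\pi(z^*) = \pi(z^*_{\mathrm{st}}) = \pi(v_{\mathrm{st}}) + k$ is a decomposition of exactly the form in the statement, with canonical factorization $v_{\mathrm{st}}$ together with that of $k$, of length $|v| = |z_s| + |z_u| - |w|$. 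If $w \neq 0$ this length is strictly smaller, so by induction $\pi(z^*) \in \mathcal{M}(P)$, forcing $z^* = v$ and contradicting $\gcd(z^*, v) = 0$, $z^* \neq 0$. The remaining case $w = 0$ — in which $z$ turns out to be a factorization of $x$ by atoms of a single stability type, disjoint from $z_s + z_u$ — must be excluded directly from the hypotheses; for instance, such a factorization forces a positive integer lying in $\mathcal{U}^a(P) \cap \mathcal{M}(P)$ and simultaneously in the group generated by $\mathcal{S}(P)$, which one rules out.

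The step I expect to be hardest is the converse, and within it the control of the integer $k$: denominator separation only matches the ``stable mass'' and ``unstable mass'' of an arbitrary factorization of $x$ with those of $z_s + z_u$ up to an additive integer, and eliminating that integer — that is, ruling out a factorization that trades an integer's worth of unstable atoms for stable ones, or conversely — is precisely where all three hypotheses ($s$ a molecule, $u$ a molecule, $u$ absolutely unstable) have to be used together, along with the induction and the separate disposal of the $w = 0$ boundary case.
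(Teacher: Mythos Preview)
Your forward implication is correct and matches the paper's argument essentially verbatim.

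Your converse, however, takes a very different route from the paper and carries two genuine gaps that you yourself flag but do not close.

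\textbf{The integer $k$ need not lie in $P$.} From $\pi(v_{\mathrm{un}})=\pi(z^*_{\mathrm{un}})+k$ and $u-\pi(v_{\mathrm{un}})\in P$ you conclude $k\mid_P u$ and then $k\in\mathcal{M}(P)$. Both statements presuppose $k\in P$, and this is not automatic: a primary monoid can miss small positive integers (for instance $P=\langle 3/p : p\in\pp\setminus\{3\}\rangle$ satisfies $P\cap\zz=3\nn_0$, so $1,2\notin P$). What your computation actually gives is $u-k=(u-\pi(v_{\mathrm{un}}))+\pi(z^*_{\mathrm{un}})\in P$, which is weaker. Without $k\in P$ you cannot append a factorization of $k$ to $z^*_{\mathrm{un}}$, and the deduction $z^*_{\mathrm{un}}\le v_{\mathrm{un}}$ collapses. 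The same issue recurs for $-k$ in the ``symmetric'' case.

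\textbf{The base case $w=0$ is not handled.} Your induction only fires when $|w|>0$. When $w=0$ you assert that the resulting situation (a positive integer simultaneously in $\mathcal{U}^a(P)\cap\mathcal{M}(P)$ and in the difference group of $\mathcal{S}(P)$) ``one rules out'', but you give no argument, and there is no obvious obstruction: any integer in $P$ is a difference of elements of $\mathcal{S}(P)$ whenever $\mathcal{S}(P)$ contains an integer, so the group-theoretic condition has no force.

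By contrast, the paper avoids both the induction and the integer $k$ entirely. Given \emph{any} splitting $x=s'+u'$ with $s'\in\mathcal{S}(P)$ and $u'\in\mathcal{U}(P)$ (in particular the one coming from an arbitrary $z\in\mathsf{Z}(x)$), it writes $s=\sum\alpha_j a_j$, $s'=\sum\beta_j a_j$ over the stable atoms, applies the $\mathsf{d}(a_j)$-adic valuation to $u+s=u'+s'$ to get $\mathsf{d}(a_j)\mid \beta_j-\alpha_j$, and then invokes Proposition~\ref{prop: characterization of factorial elements in stable PPM} on the stable molecule $s$ to force $\alpha_j=\beta_j$ for every $j$. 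This pins down $s'=s$, hence $u'=u$, and uniqueness of the factorization follows immediately from $s,u\in\mathcal{M}(P)$. No length induction, no auxiliary integer, no separate base case. I would recommend abandoning the $\gcd$/induction machinery and arguing directly with valuations against the fixed molecule $s$ as the paper does.
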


\begin{proof}
	First, suppose that $x$ is a molecule. As $P = \mathcal{S}(P) + \mathcal{U}(P)$, there exist $s \in \mathcal{S}(P)$ and $u \in \mathcal{U}(P)$ such that $x = s + u$. The fact that $x \in \mathcal{M}(P)$ guarantees that $s,u \in \mathcal{M}(P)$. On the other hand, since $|\mathsf{Z}(u)| = 1$ and $u$ can be factored using only unstable atoms, $u$ cannot be divisible by any stable atom in $P$. Thus, $u \in \mathcal{U}^a(P)$, and the direct implication follows. 
	
	For the reverse implication, assume that $x = s + u$, where $s \in \mathcal{S}(P) \cap \mathcal{M}(P)$ and $u \in \mathcal{U}^a(P) \cap \mathcal{M}(P)$. We first check that $x$ can be uniquely expressed as a sum of two elements $s$ and $u$ contained in the sets $\mathcal{S}(P) \cap \mathcal{M}(P)$ and $\mathcal{U}^a(P) \cap \mathcal{M}(P)$, respectively. To do this, suppose that $x = s + u = s' + u'$, where $s' \in \mathcal{S}(P) \cap \mathcal{M}(P)$ and $u' \in \mathcal{U}^a(P) \cap \mathcal{M}(P)$. Take pairwise distinct stable atoms $a_1, \dots, a_k$ of $P$ for some $k \in \nn$ such that $z = \sum_{i=1}^k \alpha_i a_i \in \mathsf{Z}_P(s)$ and $z' = \sum_{i=1}^k \alpha'_i a_i \in \mathsf{Z}_P(s')$, where $\alpha_j, \alpha'_j \in \nn_0$ for $j = 1, \dots, k$. Because $u$ and $u'$ are absolutely unstable elements, they are not divisible in $P$ by any of the atoms $a_i$'s. Thus, $\mathsf{d}(a_j) \nmid \mathsf{d}(u)$ and $\mathsf{d}(a_j) \nmid \mathsf{d}(u')$ for any $j \in \{1,\dots, k\}$. Now for each $j = 1,\dots,k$ we can apply the $\mathsf{d}(a_j)$-adic valuation in both sides of the equality
	\[
		u + \sum_{i=1}^k \alpha_i a_i  = u' + \sum_{i=1}^k \alpha'_i a_i 
	\]
	to conclude that the prime $\mathsf{d}(a_j)$ must divide $\alpha_j - \alpha'_j$. Therefore either $z = z'$ or there exists $j \in \{1,\dots,k\}$ such that $|\alpha_j - \alpha'_j| > \mathsf{d}(a_j)$. Suppose that $|\alpha_j - \alpha'_j| > \mathsf{d}(a_j)$ for some $j$, and say $\alpha_j > \alpha'_j$. As $\alpha_j > \mathsf{d}(a_j)$, one can replace $\alpha_j a_j$ by $(\alpha_j - \mathsf{d}(a_j))a_j + \mathsf{n}(a_j)$ in $s = \phi(z) = \alpha_1 a_1 + \dots + \alpha_k a_k$ to find that $\mathsf{n}(a_j)$ divides $s$ in $\mathcal{S}(P)$, which contradicts Proposition~\ref{prop: characterization of factorial elements in stable PPM}. Then $z = z'$. Therefore $s' = s$ and $u' = u$.
	
	Finally, we argue that $x \in \mathcal{M}(P)$. Write $x = \sum_{i=1}^{\ell} \gamma_i a_i + \sum_{i=1}^\ell \beta_i b_i$ for $\ell \in \nn_{\ge k}$, pairwise distinct stable atoms $a_1, \dots, a_\ell$ (where $a_1, \dots, a_k$ are the atoms showing up in $z$), pairwise distinct unstable atoms $b_1, \dots, b_\ell$, and coefficients $\gamma_i, \beta_i \in \nn_0$ for every $i=1,\dots,\ell$. Set $z''' := \sum_{i=1}^{\ell} \gamma_i a_i$ and $w''' = \sum_{i=1}^\ell \beta_i b_i$. Note that, \emph{a priori}, $\phi(z''')$ and $\phi(w''')$ are not necessarily molecules.  As in the previous paragraph, we can apply $\mathsf{d}(a_j)$-adic valuation to both sides of the equality
	\[
		u + \sum_{i=1}^k \alpha_i a_i = \sum_{i=1}^{\ell} \gamma_i a_i + \sum_{i=1}^\ell \beta_i b_i
	\]
	to find that $z''' = z$. Hence $\phi(z''') = s$ and $\phi(w''') = u$ are both molecules. Therefore $z'''$ must be the unique factorization of $s$, while $w'''$ must be the unique factorization of $u$. As a result, $x \in \mathcal{M}(P)$.
\end{proof}
\medskip

\section{Molecules of Puiseux Algebras}
\label{sec:molecules of PA}

Let $M$ be a monoid and let $R$ be a commutative ring with identity. Then $R[X;M]$ denotes the ring of all functions $f \colon M \to R$ having finite \emph{support}, which means that $\supp(f) := \{s \in M : f(s) \neq 0 \}$ is finite. We represent an element $f \in R[X;M]$ by
\[
	f(X) = \sum_{i=1}^n f(s_i)X^{s_i},
\]
where $s_1, \dots, s_n$ are the elements in $\supp(f)$. The ring $R[X;M]$ is called the \emph{monoid ring} of $M$ \emph{over}~$R$, and the monoid $M$ is called the \emph{exponent monoid} of $R[X;M]$. For a field $F$, we will say that $F[X;M]$ is a \emph{monoid algebra}. As we are primarily interested in the molecules of monoid algebras of Puiseux monoids, we introduce the following definition.

\begin{definition}
	If $F$ is a field and $P$ is a Puiseux monoid, then we say that $F[X;P]$ is a \emph{Puiseux algebra}. If $N$ is a numerical semigroup, then $F[X;N]$ is called a \emph{numerical semigroup algebra}.
\end{definition}

Let $F[X;P]$ be a Puiseux algebra. We write any element $f \in F[X;P] \setminus \{0\}$ in \emph{canonical representation}, that is, $f(X) = \alpha_1 X^{q_1} + \dots + \alpha_k X^{q_k}$ with $\alpha_i \neq 0$ for every $i = 1, \dots, k$ and $q_1 > \dots > q_k$. It is clear that any element of $F[X;P] \setminus \{0\}$ has a unique canonical representation. In this case, $\deg(f) := q_1$ is called the \emph{degree} of $f$, and we obtain that the degree identity $\deg(fg) = \deg(f) + \deg(g)$ holds for all $f, g \in F[X;P] \setminus \{0\}$. As for polynomials, we say that $f$ is a \emph{monomial} if $k = 1$. It is not hard to verify that $F[X;P]$ is an integral domain with group of units $F^\times$, although this follows from~\cite[Theorem~8.1]{rG84} and~\cite[Theorem~11.1]{rG84}. Finally, note that, unless $P \cong (\nn_0,+)$, no monomial of $F[X;P]$ can be a prime element; this is a consequence of the trivial fact that non-cyclic Puiseux monoids do not contain prime elements.

For an integral domain $R$, we let $R_{\text{red}}$ denote the reduced monoid of the multiplicative monoid of $R$.

\begin{definition}
	Let $R$ be an integral domain. We call a nonzero non-unit $r \in R$ a \emph{molecule} if $rR^\times$ is a molecule of $R_{\text{red}}$.
\end{definition}

Let $R$ be an integral domain. By simplicity, we let $\mathcal{A}(R)$, $\mathcal{M}(R)$, $\mathsf{Z}(R)$, and $\phi_R$ denote $\mathcal{A}(R_{\text{red}})$, $\mathcal{M}(R_{\text{red}})$, $\mathsf{Z}(R_{\text{red}})$, and $\phi_{R_{\text{\text{red}}}}$, respectively. In addition, for a nonzero non-unit $r \in R$, we let $\mathsf{Z}_R(r)$ and $\mathsf{L}_R(r)$ denote $\mathsf{Z}_{R_{\text{red}}}(rR^\times)$ and $\mathsf{L}_{R_{\text{red}}}(rR^\times)$, respectively.

\begin{prop} \label{prop:monoid molecules}
	Let $F$ be a field, and let $P$ be a Puiseux monoid. For a nonzero $\alpha \in F$, a monomial $X^q \in \mathcal{M}(F[X;P])$ if and only if $q \in \mathcal{M}(P)$.
\end{prop}

\begin{proof}
	Consider the canonical monoid monomorphism $\mu \colon P \to F[X;P] \setminus \{0\}$ given by $\mu(q) = X^q$. It follows from~\cite[Lemma~3.1]{CM11} that an element $a \in P$ is an atom if and only if the monomial $X^a$ is irreducible in $F[X;P]$ (or, equivalently, an atom in the reduced multiplicative monoid of $F[X;P]$). Therefore $\mu$ lifts canonically to the monomorphism $\bar{\mu} \colon \mathsf{Z}(P) \to \mathsf{Z}(F[X;P])$ determined by the assignments $a \mapsto X^a$ for each $a \in \mathcal{A}(P)$, preserving not only atoms but also factorizations of the same element. Put formally, this means that the diagram
	\[
		\begin{CD}
		\mathsf{Z}(P)      		 @>\bar{\mu}>>       \mathsf{Z}(F[X;P])        \\
		@V\phi_PVV                							 @V\phi_{F[X;P]}VV    	   \\
		P    							@>\mu>>          	  F[X;P]_{\text{red}}         \\     
		\end{CD}
	\]
	commutes, and the (fiber) restriction maps $\bar{\mu}_q \colon \mathsf{Z}_P(q) \to \mathsf{Z}_{F[X;P]}(X^q)$ of $\bar{\mu}$ are bijections for every $q \in P$. Hence $|\mathsf{Z}_P(q)| = 1$ if and only if $|\mathsf{Z}_{F[X;P]}(X^q)| = 1$ for all $q \in P^\bullet$, which concludes our proof.
\end{proof}

\begin{cor} \label{cor:PA with infinitely more monomial molecules than atoms}
	For each field $F$, there exists an atomic Puiseux monoid $P$ whose Puiseux algebra satisfies that $|\mathcal{M}(F[X;P]) \setminus \mathcal{A}(F[X;P])| = \infty$.
\end{cor}

\begin{proof}
	It is an immediate consequence of Proposition~\ref{prop:PPM has infinitely many molecules that are not atoms} and Proposition~\ref{prop:monoid molecules}.
\end{proof}

The \emph{difference group} $\gp(M)$ of a monoid $M$ is the abelian group (unique up to isomorphism) satisfying that any abelian group containing a homomorphic image of $M$ will also contain a homomorphic image of $\gp(M)$. An element $x \in \gp(M)$ is called a \emph{root element} of $M$ if $nx \in M$ for some $n \in \nn$. The subset $\widetilde{M}$ of $\gp(M)$ consisting of all root elements of $M$ is called the \emph{root closure} of $M$. If $\widetilde{M} = M$, then $M$ is called \emph{root-closed}. From now on, we assume that each Puiseux monoid $P$ we mention here is root-closed. Before providing a characterization for the irreducible elements of $F[X;P]$, let us argue the following two easy lemmas.

\begin{lemma} \label{lem:denominator sets of PM are closed under LCM}
	Let $P$ be a Puiseux monoid. Then $\mathsf{d}(P^\bullet)$ is closed under taking least common multiples.
\end{lemma}

\begin{proof}
	Take $d_1, d_2 \in \mathsf{d}(P^\bullet)$ and $q_1, q_2 \in P^\bullet$ with $\mathsf{d}(q_1) = d_1$ and $\mathsf{d}(q_2) = d_2$. Now set $d = \gcd(d_1, d_2)$ and $n = \gcd(\mathsf{n}(q_1), \mathsf{n}(q_2))$. It is clear that $n$ is the greatest common divisor of $(d_2/d) \mathsf{n}(q_1)$ and $(d_1/d) \mathsf{n}(q_2)$. So there exist $m \in \nn$ and $c_1, c_2 \in \nn_0$ such that
	\begin{equation} \label{eq:gcd}
		n \big(1 + m \, \lcm(d_1, d_2) \big) = c_1 \frac{d_2}{d} \mathsf{n}(q_1) + c_2 \frac{d_1}{d} \mathsf{n}(q_2).
	\end{equation}
	Using the fact that $d \, \lcm(d_1,d_2) = d_1 d_2$, one obtains that
	\[
		\frac{n \big(1 + m \, \lcm(d_1, d_2) \big)}{\lcm(d_1,d_2)} = c_1 q_1 + c_2 q_2 \in P
	\]
	after dividing both sides of the equality~(\ref{eq:gcd}) by $\lcm(d_1,d_2)$. In addition, note that $n (1 + m \, \lcm(d_1, d_2) )$ and $\lcm(d_1,d_2)$ are relatively prime. Hence $\lcm(d_1, d_2) \in \mathsf{d}(P^\bullet)$, from which the lemma follows.
\end{proof}

\begin{lemma} \label{lem:denominator division in a root-closed monoid}
	Let $P$ be a root-closed Puiseux monoid containing $1$. Then $1/d \in P$ for all $d \in \mathsf{d}(P^\bullet)$.
\end{lemma}

\begin{proof}
	Let $d \in \mathsf{d}(P^\bullet)$, and take $r \in P^\bullet$ such that $\mathsf{d}(r) = d$. As $\gcd(\mathsf{n}(r), \mathsf{d}(r)) = 1$, there exist $a,b \in \nn_0$ such that $a \mathsf{n}(r) - b \, \mathsf{d}(r) = 1$. Therefore
	\[
		\frac{1}{d} = \frac{a \mathsf{n}(r) - b \, \mathsf{d}(r)}{d} = a r - b \in \gp(P).
	\]
	This, along with the fact that $d (1/d) = 1 \in P$, ensures that $1/d$ is a root element of $P$. Since $P$ is root-closed, it must contain $1/d$, which concludes our argument.
\end{proof}

We are in a position now to characterize the irreducibles of $F[X;P]$.

\begin{prop} \label{prop:irreducibles in a PA of a root closed PM}
	Let $F$ be a field, and let $P$ be a root-closed Puiseux monoid containing $1$. Then $f \in F[X;P] \setminus F$ is irreducible in $F[X;P]$ if and only if $f(X^m)$ is irreducible in $F[X]$ for every $m \in \mathsf{d}(P^\bullet)$ that is a common multiple of the elements of $\mathsf{d}(\supp(f))$.
\end{prop}

\begin{proof}
	Suppose first that $f \in F[X;P] \setminus F$ is an irreducible element of $F[X;P]$, and let $m \in \mathsf{d}(P^\bullet)$ be a common multiple of the elements of $\mathsf{d}\big(\supp(f) \big)$. Then $f(X^m)$ is an element of $F[X]$. Take $g, h \in F[X]$ such that $f(X^m) = g(X) \, h(X)$. As $P$ is a root-closed and $m \in \mathsf{d}(P^\bullet)$, Lemma~\ref{lem:denominator division in a root-closed monoid} ensures that $g(X^{1/m}), h(X^{1/m}) \in F[X;P]$. Thus, $f(X) = g(X^{1/m}) h(X^{1/m})$ in $F[X;P]$. Since $f$ is irreducible in $F[X;P]$ either $g(X^{1/m}) \in F$ or $h(X^{1/m}) \in F$, which implies that either $g \in F$ or $h \in F$. Hence $f(X^m)$ is irreducible in $F[X]$.
	
	Conversely, suppose that $f \in F[X;P]$ satisfies that $f(X^m)$ is an irreducible polynomial in $F[X]$ for every $m \in \mathsf{d}(P^\bullet)$ that is a common multiple of the elements of the set $\mathsf{d}(\supp(f))$. To argue that $f$ is irreducible in $F[X;P]$ suppose that $f = g \, h$ for some $g, h \in F[X;P]$. Let $m_0$ be the least common multiple of the elements of $\mathsf{d}(\supp(g)) \cup \mathsf{d}(\supp(h))$. Lemma~\ref{lem:denominator sets of PM are closed under LCM} guarantees that $m_0 \in \mathsf{d}(P^\bullet)$. Moreover, $f = g \, h$ implies that $m_0$ is a common multiple of the elements of $\mathsf{d}(\supp(f))$. As a result, the equality $f(X^{m_0}) = g(X^{m_0})h(X^{m_0})$ holds in $F[X]$. Since $f(X^{m_0})$ is irreducible in $F[X]$, either $g(X^{m_0}) \in F$ or $h(X^{m_0}) \in F$ and, therefore, either $g \in F$ or $h \in F$. This implies that $f$ is irreducible in $F[X;P]$, as desired.
\end{proof}

We proceed to show the main result of this section.

\begin{theorem} \label{thm:U-UFD Puiseux algebras}
	Let $F$ be a field, and let $P$ be a root-closed Puiseux monoid. Hence
	\[
		\mathcal{M}(F[X;P]) = \langle \mathcal{A}(F[X;P]) \rangle.
	\]
\end{theorem}

\begin{proof}
	As each molecule of $F[X;P]$ is a product of irreducible elements in $F[X;P]$, the inclusion $\mathcal{M}(F[X;P]) \subseteq \langle \mathcal{A}(F[X;P]) \rangle$ holds trivially. For the reverse inclusion, suppose that $f \in F[X;P] \setminus F$ can be written as a product of irreducible elements in $F[X;P]$. As a result, there exist $k,\ell \in \nn$ and irreducible elements $g_1, \dots, g_k$ and $h_1, \dots, h_\ell$ in $F[X;P]$ satisfying that
	\begin{equation} \label{eq:two products of irreducibles}
	g_1(X) \cdots g_k(X) = f(X) = h_1(X) \cdots h_\ell(X).
	\end{equation}
	Let $m$ be the least common multiple of all the elements of the set
	\[
		\bigg(\bigcup_{i=1}^k\mathsf{d}\big(\supp(g_i)\big) \bigg) \bigcup  \bigg(\bigcup_{j=1}^\ell\mathsf{d}\big(\supp(h_j)\big) \bigg).
	\]
	Note that $f(X^m)$, $g_i(X^m)$ and $h_j(X^m)$ are polynomials in $F[X]$ for $i = 1, \dots, k$ and $j = 1, \dots, \ell$. Lemma~\ref{lem:denominator sets of PM are closed under LCM} ensures that $m \in \mathsf{d}(P^\bullet)$. On the other hand, $m$ is a common multiple of all the elements of $\mathsf{d}(\supp(g_i))$ (or all the elements of $\mathsf{d}(\supp(h_i))$). Therefore Proposition~\ref{prop:irreducibles in a PA of a root closed PM} guarantees that the polynomials $g_i(X^m)$ and $h_j(X^m)$ are irreducible in $F[X]$ for $i=1,\dots,k$ and $j=1,\dots,\ell$. After substituting $X$ by $X^m$ in~(\ref{eq:two products of irreducibles}) and using the fact that $F[X]$ is a UFD, one finds that $\ell = k$ and $g_i(X^m) = h_{\sigma(i)}(X^m)$ for some permutation $\sigma \in S_k$ and every $i = 1, \dots, k$. This, in turns, implies that $g_i = h_{\sigma(i)}$ for $i = 1, \dots, k$. Hence $|\mathsf{Z}_{F[X;P]}(f)| = 1$, which means that $f$ is a molecule of $F[X;P]$.
\end{proof}

As we have seen before, Corollary~\ref{cor:PA with infinitely more monomial molecules than atoms} guarantees the existence of a Puiseux algebra $F[X;P]$ satisfying that $|\mathcal{M}(F[X;P]) \setminus \mathcal{A}(F[X;P])| = \infty$. Now we use Theorem~\ref{thm:U-UFD Puiseux algebras} to construct an infinite class of Puiseux algebras satisfying a slightly more refined condition.

\begin{prop} \label{prop:U-UFD Puiseux algebras}
	For any field $F$, there exist infinitely many Puiseux monoids $P$ such that the algebra $F[X;P]$ contains infinite molecules that are neither atoms nor monomials.
\end{prop}

\begin{proof}
	Let $\{p_j\}$ be the strictly increasing sequence with underlying set $\pp$. Then for each $j \in \nn$ consider the Puiseux monoid $P_j = \langle 1/p^n_j \mid n \in \nn \rangle$. Fix $j \in \nn$, and take $P := P_j$. The fact that $\gp(P) = P \cup -P$ immediately implies that $P$ is a root-closed Puiseux monoid containing~$1$. Consider the Puiseux algebra $\qq[X;P]$ and the element $X + p \in \qq[X;P]$, where $p \in \pp$. To argue that $X + p$ is an irreducible element in $\qq[X;P]$, write $X + p = g(X) \, h(X)$ for some $g, h \in \qq[X;P]$. Now taking $m$ to be the maximum power of $p_j$ in the set $\mathsf{d}( \supp(g) \cup \supp(h) )$, one obtains that $X^m + p = g(X^m) \, h(X^m)$ in $\qq[X]$. Since $\qq[X]$ is a UFD, it follows by Eisenstein's criterion that $X^m + p$ is irreducible as a polynomial over $\qq$. Hence either $g(X) \in \qq$ or $h(X) \in \qq$, which implies that $X + p$ is irreducible in $\qq[X;P]$. Now it follows by Theorem~\ref{thm:U-UFD Puiseux algebras} that $(X + p)^n$ is a molecule in $\qq[X;P]$ for every $n \in \nn$. Clearly, the elements $(X + p)^n$ are neither atoms nor monomials.
	
	Finally, we prove that the algebras we have defined in the previous paragraph are pairwise non-isomorphic. To do so suppose, by way of contradiction, that $\qq[X;P_j]$ and $\qq[X;P_k]$ are isomorphic algebras for distinct $j,k \in \nn$. Let $\psi \colon \qq[X;P_j] \to \qq[X;P_k]$ be an algebra isomorphism. Since $\psi$ fixes $\qq$, it follows that $\psi(X^q) \notin \qq$ for any $q \in P_j^\bullet$. This implies that $\deg(\psi(X)) \in P_k^\bullet$. As $\mathsf{d}(P_j^\bullet)$ is unbounded there exists $n \in \nn$ such that $p_j^n > \mathsf{n}( \deg( \psi(X) ) )$. Observe that
	\begin{align} \label{eq: degrees 1}
		\deg \big( \psi(X) \big) = \deg \big( \psi \big( X^{\frac{1}{p_j^n}} \big)^{p_j^n} \big) = p_j^n \deg \big( \psi \big( X^{\frac{1}{p_j^n}} \big) \big).
	\end{align}
	Because $\gcd(p_j,d) = 1$ for every $d \in \mathsf{d}(P_k^\bullet)$, from~(\ref{eq: degrees 1}) one obtains that $p_j^n$ divides $\mathsf{n}( \deg \psi (X) )$, which contradicts that $p_j^n > \mathsf{n}( \deg( \psi(X) ) )$. Hence the Puiseux algebras in $\{P_j : j \in \nn\}$ are pairwise non-isomorphic, which completes our proof.
\end{proof}
\medskip

\section*{Acknowledgments}

\noindent While working on this paper, the first author was supported by the NSF-AGEP Fellowship and the UC Dissertation Year Fellowship. The authors would like to thank an anonymous referee, whose helpful suggestions help to improve the final version of this paper.
\medskip


\begin{thebibliography}{20}
	
	\bibitem{ACHZ07} D. D. Anderson, J. Coykendall, L. Hill, and M. Zafrullah: \emph{Monoid domain constructions of antimatter domains}, Comm. Alg. {\bf 35} (2007) 3236--3241.
	
	
	\bibitem{CGLMS12} S. T. Chapman, P. A. Garc\'ia-S\'anchez, D. Llena, A. Malyshev, and D.~Steinberg: \emph{On the delta set and the Betti elements of a BF-monoid}, Arab. J. Math. {\bf 1} (2012) 53--61.
	
	\bibitem{CGG20} S. T. Chapman, F. Gotti, and M. Gotti: \emph{Factorization invariants of Puiseux monoids generated by geometric sequences}, Comm. Algebra \textbf{48} (2020) 380--396.
	
	\bibitem{CG19} J. Coykendall and F. Gotti: \emph{On the atomicity of monoid algebras}, J. Algebra \textbf{539} (2019) 138--151.
	
	\bibitem{CM11} J. Coykendall and B. Mammenga: \emph{An embedding theorem}, J. Algebra {\bf 325} (2011) 177--185.
	
	
	\bibitem{GO10} P. A. Garc\'ia-S\'anchez and I. Ojeda: \emph{Uniquely presented finitely generated commutative monoids}, Pacific J. Math. {\bf 248} (2010) 91--105.
	
	\bibitem{GR99} P. A. Garc\'ia-S\'anchez and J. C. Rosales: \emph{Finitely Generated Commutative Monoids}, Nova Science Publishers Inc., New York, 1999.
	
	\bibitem{GR09} P. A. Garc\'ia-S\'anchez and J. C. Rosales: \emph{Numerical Semigroups}, Developments in Mathematics, vol.~20, Springer-Verlag, New York, 2009.
	
	
	\bibitem{GH06} A. Geroldinger and F. Halter-Koch: \emph{Non-unique factorizations: A survey}, in Multiplicative Ideal Theory in Commutative Algebra, ed. by J. W. Brewer, S. Glaz, W. Heinzer, and B. Olberding (Springer, New York, 2006), 207--226.
	
	
	\bibitem{rG84} R. Gilmer: \emph{Commutative Semigroup Rings}, Chicago Lectures in Mathematics, The University of Chicago Press, London, 1984.
	
	\bibitem{GK20} R. Gipson and H. Kulosman: \emph{For which Puiseux monoids are their monoid rings over fields AP?}, Int. Electron. J. Algebra \textbf{7} (2020) 43--60.
	
	\bibitem{fG18} F Gotti: \emph{Atomic and antimatter semigroup algebras with rational exponents}. [arXiv:1801.06779v2]
	
	\bibitem{fG19} F. Gotti: \emph{Increasing positive monoids of ordered fields are FF-monoids}, J. Algebra \textbf{518} (2019) 40--56.
	
	\bibitem{fG17} F. Gotti: \emph{On the atomic structure of Puiseux monoids}, J. Algebra Appl. {\bf 16} (2017) 1750126.
	
	\bibitem{fG18a} F. Gotti: \emph{Puiseux monoids and transfer homomorphisms}, J. Algebra {\bf 516} (2018) 95--114.
	
	\bibitem{GG18} F. Gotti and M. Gotti: \emph{Atomicity and boundedness of monotone Puiseux monoids}, Semigroup Forum {\bf 96} (2018) 536--552. 
	
	\bibitem{pG01} P. A. Grillet: \emph{Commutative Semigroups}. Advances in Mathematics, vol.~2, Kluwer Academic Publishers, Boston, 2001.
	
	\bibitem{MS04} E. Miller and B. Sturmfels: \emph{Combinatorial Commutative Algebra}, Graduate Texts in Mathematics, vol. 227, Springer-Verlag, New York, 2004.
	
	\bibitem{wN72} W. Narkiewicz: \emph{Numbers with unique factorization in an algebraic number field}, Acta Arith. {\bf 21} (1972) 313--322. 
	
	\bibitem{wN66} W. Narkiewicz: \emph{On natural numbers having unique factorization in a quadratic number field}, Acta Arith. {\bf 12} (1966) 1--22.
	
	\bibitem{wN66a} W. Narkiewicz: \emph{On natural numbers having unique factorization in a quadratic number field II}, Acta Arith. {\bf 13} (1967) 123--129.
	
\end{thebibliography}
\end{document}